\newtheorem{theorem}{Theorem}
\newtheorem{assumption}[theorem]{Assumption}
\newtheorem{lemma}[theorem]{Lemma}
\newtheorem{remark}[theorem]{Remark}
\newtheorem{definition}[theorem]{Definition}
\newtheorem{proposition}[theorem]{Proposition}
\newenvironment{proof}{\par\noindent\emph{Proof. }}{\hfill$\square$\par}
\numberwithin{equation}{section}
\providecommand{\href}[2]{#2}
\newcommand{\beq}{\begin{equation}}
\newcommand{\eq}{\end{equation}}
\newcommand{\E}{\mathbb{E}}
\newcommand{\V}{\text{Var}}
\renewcommand{\P}{\mathbb{P}}
\newcommand{\e}{{\rm e}}
\renewcommand{\d}{{\rm d}}
\newcommand\blfootnote[1]{%
	\begingroup
	\renewcommand\thefootnote{}\footnote{#1}%
	\addtocounter{footnote}{-1}%
	\endgroup
}
\title{Pollaczek contour integrals for the \\fixed-cycle traffic-light queue\blfootnote{Department of Mathematics and Computer Science, Eindhoven University of Technology, P.O. Box 513, 5600MB Eindhoven, The Netherlands. Email: \href{mailto:m.a.a.boon@tue.nl}{m.a.a.boon@tue.nl}, \href{mailto:a.j.e.m.janssen@tue.nl}{a.j.e.m.janssen@tue.nl}, \href{mailto:j.s.h.v.leeuwaarden@tue.nl}{j.s.h.v.leeuwaarden@tue.nl} and \href{mailto:r.w.timmerman@tue.nl}{r.w.timmerman@tue.nl}}}
\author{M.A.A. Boon \and A.J.E.M. Janssen \and J.S.H. van Leeuwaarden \and R.W. Timmerman}
\begin{document}
	\maketitle

	\begin{abstract}
		The fixed-cycle traffic-light (FCTL) queue is the standard model for intersections with static signaling, where vehicles arrive, form a queue and depart during cycles controlled by a traffic light. Classical analysis of the FCTL queue based on transform methods requires a computationally challenging step of finding the complex-valued roots of some characteristic equation.  Building on the recent work of Oblakova et al.~\cite{ahmad}, we obtain a contour-integral expression, reminiscent of Pollaczek integrals for bulk-service queues, for the probability generating function of the steady-state FCTL queue. We also show that similar contour integrals arise for generalizations of the FCTL queue introduced in \cite{ahmad} that relax some of the classical assumptions. Our results allow to compute the queue-length distribution and all its moments using algorithms that rely on contour integrals and avoid root-finding procedures.

		\bigskip\noindent\textbf{Keywords:} fixed-cycle traffic-light queue; transform methods; complex analysis\\
		\noindent {\bfseries AMS 2010 Subject Classification}. 60E10, 60J10, 60K25, 68M20, 90B20
		
	\end{abstract}

	
	
	\section{Introduction}
	The fixed-cycle traffic-light (FCTL) queue is an intensively studied stochastic model in traffic engineering \cite{BvLfctlcorrelated,darroch,hagen1989comparison,mcneill,newell65,fctlsolo,webster}. Vehicles arrive to an intersection controlled by a traffic light and form a queue. The FCTL queue is traditionally modeled in discrete time and time is divided into slots of unit length. The green and red periods, of length $g$ and $r$ slots, respectively, and thus the cycle length $c=g+r$, are assumed to be fixed multiples of one slot.
	Each slot corresponds to the time needed for a delayed vehicle to depart from the queue. Vehicles that arrive during a red period are delayed, as well as those that arrive during a green period and meet a non-empty queue. Those vehicles that arrive to the queue and are delayed, join the queue at the end of the slot in which they arrive.
	Vehicles that arrive during a green period and meet no other vehicles in the queue, are treated according to the following assumption:
	
	\begin{definition}[FCTL assumption] For those cycles in which the queue clears before the green period terminates, all vehicles that arrive during the residual green period pass through the system and experience no delay whatsoever.
	\end{definition}
	
	\noindent So, in a case of a non-empty queue during the green period, the queue length in the next slot is reduced by one compared to the queue length in the previous slot, but increased by the number of arrivals in that slot, whereas the queue remains empty if the queue was already empty at the start of the slot.
	
	The FCTL assumption is quite realistic for straight-going traffic flows, since vehicles that find no queue during a residual green period will proceed without stopping and can keep driving at free-flow speed.
	For turning flows, however, the FCTL assumption seems too restrictive. Oblakova et al.~\cite{ahmad} considered several generalizations of the FCTL queue that alleviate the classical FCTL assumption to account for right-turns, disruptions of the traffic and uncertainty in departure times. We will cover these extensions in Subsection \ref{general}.
	
	The FCTL assumption (or its relaxed counterparts in \cite{ahmad}) causes some mathematical challenges. Let $X_{k,n}$ denote the queue length at time $k$ in cycle $n$ (time expressed in slots). Then, in cycle $n$, $X_{0,n}$ is the queue length at the beginning of the green period, and $X_{g,n}$ the overflow defined as the queue length at the end of the green period (and thus the beginning of the red period). Let $A_n$ denote the total number of vehicles that arrive at the intersection in between the two measurements of the overflow $X_{g,n}$ and $X_{g,n+1}$. Thus $A_n$ are the arrivals from the end of the time slot at which $X_{g,n}$ is measured onwards in a consecutive red and green period. 
	Further, $A_n=A_n^d+A_n^p$, where $A_n^d$ denotes the number of delayed vehicles and $A_n^p$ the number of vehicles that pass without delay on behalf of the FCTL assumption. The overflow queue can then be defined as
	\begin{equation}\label{overflow}
	X_{g,n+1}=\max\{X_{g,n}+A_{n}^d-g,0\}.
	\end{equation}
	The fact that $A_n^d$ depends on both $X_{g,n}$ and the exact specification of when the arrivals occur makes  (\ref{overflow}) hard to analyze.
	To capture that level of detail, let $Y_{k,n}$ denote the number of vehicles that arrive to the intersection during slot $k$ in cycle $n$. The random variables $Y_{k,n}$ are assumed independent and identically distributed, for all $k$ and $n$. Notice that all of the above assumptions together make that the queue lengths at the end of time slots can be modeled as a discrete-time Markov chain. Using analytic techniques suitable for dealing with such Markov chains, Darroch \cite{darroch} obtained the probability generating function (PGF) of the steady-state overflow queue (the number of vehicles waiting in front of the traffic light at the end of a green period) and the PGF of the steady-state delay was obtained in van Leeuwaarden \cite{fctlsolo}. Hence, all information about the distribution of the steady-state overflow queue and steady-state delay in the FCTL queue can be obtained from the results in \cite{darroch,fctlsolo}, including all moments of the steady-state queue length and delay, and the distribution of the output process (the way vehicles leave the intersection).
	
	The FCTL queue belongs to a large class of cyclic queueing models related to vehicle dispatching with uncertain arrivals and bulk service \cite{powell1985analysis,powell1986bulk,van2006discrete}. A range of transportation and manufacturing systems can be modeled in this way, including batch production systems, bulk movements of goods in a factory, truck shipments and bus transportation.  Within this class of models, many different rules can be considered that apply to customer arrivals and vehicle departures within a cycle. Think of vehicle-cancellation policies that hold a vehicle until the queue length reaches a specified threshold. The FCTL assumption can also be viewed as a special rule that influences the dynamics within a cycle.
	
	The classical FCTL treatment in \cite{darroch,fctlsolo} comes with computational challenges. This is because the PGF for the stationary queue length distribution contains $g-1$ boundary terms that need to be found separately. The traditional way of determining these remaining unknowns consists of two steps: finding the $g-1$ complex-valued roots and using these roots as input for a system of linear equations whose solution gives the boundary terms. Both steps can present difficulties, but were somehow considered unavoidable in the mathematically rigorous treatment of the FCTL queue \cite{darroch,mcneill,newell65,fctlsolo,webster} and of related bulk-service queues \cite{britt1,powell1985analysis,powell1986bulk}. This perspective changed with the work of Oblakova et al.~\cite{ahmad} that presented a contour integral expression of the mean stationary queue length. The present paper extends that methodology to the transform domain and results in a contour-integral expression for the PGF of the FCTL queue and some of the generalizations considered in \cite{ahmad}.
	
	\vspace{.3cm}
	\noindent{\bf Paper outline.} We  present in Section \ref{sec:main} the main result of the paper, an alternative representation for the transform solution in terms of a contour integral. Theorem \ref{mainttt} deals with the classical FCTL queue and Theorem \ref{thm:extension} with the generalized FCTL queues introduced in \cite{ahmad}.
	We also explain in Section \ref{sec:num}  how these contour integrals lead to algorithms that can compete with existing algorithms based on root-finding. The proof of our main results is presented in Section \ref{sec:proof} and uses several basic notions from complex analysis. We present some conclusions in Section \ref{sec:con}.

	\section{Main results}\label{sec:main}
	We briefly review the standard solution method for obtaining an exact transform solution for the steady-state FCTL queue in Subsection \ref{subb1}. We then present in Subsection \ref{subb2} the  contour integral representation for the FCTL queue and in Subsection \ref{general} for the generalized FCTL queues. 
	
	\subsection{Standard solution}\label{subb1}
	Let $Y$ be the number of arrivals during one slot and define $Y(z)= \E[z^Y]$. Assume $\P(Y=0)>0$, $Y'(1)<1$, and $Y(z)$ to be analytic in a region $|z|<R$ with $R>1$ and $R$ maximal. The key quantity in the mathematical analysis of the FCTL queue is the steady-state overflow queue, defined as $X_g=\lim_{n\to\infty}X_{g,n}$.
	Clearly, to have stability, and for $X_g$ to be well-defined,
	\begin{align}\label{stabilityc}
	c\E[Y]<g.
	\end{align}
	Using the kernel method and transform techniques, the PGF of $X_g$, denoted by $X_g(z)=\E[z^{X_g}]$ can be obtained using a by now classical line of reasoning. With $A(z)=\E[z^A]=Y(z)^c$ it can be shown that \cite{darroch,fctlsolo}
	\begin{equation}\label{pgf}
	X_g(z)=\frac{(z-Y(z))\sum_{k=0}^{g-1}q_kz^k Y(z)^{g-1-k}}{z^g-A(z)}.
	\end{equation}
	This expression still contains $g$ unknowns $q_0,\ldots,q_{g-1}$, representing the probability that the queue empties in slot $k$, thus $\P(X_k=0)=q_k$, which can be found by exploiting the analytic properties of PGFs. With Rouch\'{e}'s theorem, it can be shown that the
	denominator of (\ref{pgf}) has $g$ zeros on or within
	the unit circle $|z| = 1$.
	Because a PGF is
	well-defined in $|z|\leq 1$, the numerator of $X_g(z)$ should vanish at each of the zeros. This gives
	$g$ equations. One of the zeros equals 1, and leads to a trivial equation.
	However, the normalization condition $X_g(1)=1$ provides an
	additional equation.
	So that summarizes the highest level of general development for FCTL queue analysis: transform techniques yield an expression for $X_g(z)$ that in order to be evaluated demands finding $g-1$ roots in the complex plane of the function $z^g=A(z)$ and solving a set of $g$ linear equations.
	
	
	\subsection{Standard FCTL queue}\label{subb2}
	We now turn to the alternative expression for $X_g(z)$, where we change the argument of $X_g$ from $z$ to $w$ to distinguish between the standard version of the PGF and the contour-integral version.
	Here is the main result in this paper:
	\setcounter{theorem}{0}
	\begin{theorem}[Pollaczek integral for FCTL]\label{mainttt}
		There is an $\epsilon_0>0$ such that for all $\epsilon\in(0,\epsilon_0)$
		\begin{equation}\label{pk1}
		X_g(w)=\exp\left(\frac{1}{2\pi i}\oint_{|z|=1+\epsilon}\frac{Y'(z)z-Y(z)}{z-Y(z)}\,\frac{w-Y(w)}{z Y(w) -w Y(z)}
		\ln\left(1-\frac{A(z)}{z^g}\right)\,\d z\right), \qquad |w|<1+\epsilon,
		\end{equation}
		with principal value of the logarithm.
	\end{theorem}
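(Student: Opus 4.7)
The plan is to derive (\ref{pk1}) by contour-deformation and residue analysis, starting from the integral on the right-hand side and reducing it to the logarithm of the standard PGF expression (\ref{pgf}). I would first establish well-definedness: the stability condition $cY'(1)<g$ implies that for sufficiently small $\epsilon>0$ one has $|A(z)/z^g|<1$ uniformly on $|z|=1+\epsilon$, so the principal branch of $\log(1-A(z)/z^g)$ is analytic in a neighborhood of the contour. Moreover, $X_g(w)$ extends to an analytic, non-vanishing function on some disk $|w|<1+\epsilon_0$: its expression (\ref{pgf}) has a numerator designed to cancel the $g$ zeros of $z^g-A(z)$ in $|z|\le 1$, and $X_g(1)=1$ together with continuity forbids zeros very close to the unit circle.

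The central algebraic step exploits the substitution $\zeta=\phi(z):=Y(z)/z$ (with $\eta:=\phi(w)$), which together with a partial-fraction decomposition yields
\[
\frac{Y'(z)z-Y(z)}{z-Y(z)}\cdot\frac{w-Y(w)}{zY(w)-wY(z)}\,dz \;=\; \left[\frac{1}{\zeta-1}+\frac{1}{\eta-\zeta}\right]\!d\zeta.
\]
In the $z$-variable this reveals that the kernel has simple poles at $z=w$ (residue $-1$) and at $z=1$ (residue $+1$), both inside the contour. Deforming $|z|=1+\epsilon$ inward toward a vanishingly small circle about $z=0$ (whose contribution is negligible, since the kernel stays bounded near the origin while $\log(1-A(z)/z^g)=-g\log z+O(1)$), the integral collects three types of contributions: the residue at $z=w$, contributing $-\log(1-A(w)/w^g)=g\log w-\log(w^g-A(w))$; a Hankel-type contribution at $z=1$, where the simple pole of the kernel coincides with a logarithmic branch point of $\log(1-A(z)/z^g)$, producing the term $\log(w-Y(w))$; and branch-cut integrals encircling each zero $z_1,\ldots,z_{g-1}$ of $z^g-A(z)$ inside $|z|<1$, which should combine into $\log R(w)$ with $R(w)=\sum_{k=0}^{g-1}q_kw^kY(w)^{g-1-k}$, through the interpolation relations $R(z_j)=0$. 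Matching with the decomposition $\log X_g(w)=\log(w-Y(w))+\log R(w)-\log(w^g-A(w))$ then yields (\ref{pk1}).

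The main obstacle is the precise identification of the branch-cut contributions at $z_1,\ldots,z_{g-1}$ as assembling into $\log R(w)$, rather than into some other linear combination of $\log(w-z_j)$-type terms; this step requires a careful choice of branch-cut structure (for instance, radial cuts from each $z_j$ toward the origin) and a small-parameter analysis anchored in the defining conditions $R(z_j)=0$ that characterise the boundary polynomial. A secondary technical point is the rigorous treatment of the coincidence of the pole of the kernel at $z=1$ with the logarithmic branch point of $\log(1-A(z)/z^g)$ there, which should be handled by an indented contour argument together with careful bookkeeping of the constant terms in the local expansions of both factors near $z=1$. As an independent sanity check, one can verify the identity at $w=1$ (where the numerator $w-Y(w)$ forces the kernel, and hence the entire integrand, to vanish, giving $\exp(0)=1=X_g(1)$) and differentiate both sides at $w=1$ to recover the mean formula already established in \cite{ahmad}.
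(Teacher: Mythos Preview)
Your approach is genuinely different from the paper's and in a sense dual to it via partial integration. The paper does \emph{not} deform the contour in the integrand of (\ref{pk1}); instead it works with the companion form
\[
I(w)=\frac{1}{2\pi i}\oint_{|z|=1+\epsilon}\ln\!\left(\frac{wY(z)-zY(w)}{Y(z)-z}\right)\frac{(z^g-A(z))'}{z^g-A(z)}\,\d z,
\]
in which the roles of ``rational factor with poles'' and ``logarithm with branch cut'' are swapped. A key lemma (injectivity of $Y(z)/z$ on a disk of radius $t_0>1$) shows that this logarithm has a \emph{single} branch cut, namely the real segment $[1,w]$ when $1<w<1+\epsilon$. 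The rational factor then has simple poles exactly at $z_0=1,z_1,\dots,z_{g-1}$, so Cauchy's theorem gives the residue sum $\sum_{k=1}^{g-1}\ln Q(z_k,w)$ directly, and a single keyhole around $[1,w]$ supplies the remaining factors in the product form (\ref{pgf4}). Only after this is (\ref{pk1}) obtained by one partial integration, which also upgrades the validity range to the full disk $|w|<1+\epsilon$.

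Your partial-fraction identity in the variable $\zeta=Y(z)/z$ is correct and elegant, but running the residue argument directly on (\ref{pk1}) runs into real obstacles that your outline does not resolve. First, $\ln(1-A(z)/z^g)$ has $g+1$ branch points in $|z|<1+\epsilon$: the $g$ zeros $z_0=1,z_1,\dots,z_{g-1}$ of $z^g-A(z)$ \emph{and} an order-$g$ branch point at $z=0$ (since $1-A(z)/z^g\sim -A(0)z^{-g}$). With radial cuts to the origin, the cut emanating from $z_0=1$ is the segment $(0,1)$ and therefore passes through every real $w\in(0,1)$, so your ``residue at $z=w$'' is not even defined in the range you most need; moreover its value $-\ln(1-A(w)/w^g)$ depends on the branch reached by continuation and is not the principal value for general $|w|<1$. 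Second, your own $\zeta$-antiderivative gives, for the cut from $0$ to $z_k$, the contribution $\ln\frac{\zeta_k-1}{\zeta_k-\eta}=\ln w-\ln\frac{wY(z_k)-z_kY(w)}{Y(z_k)-z_k}$, so the cuts do \emph{not} assemble into $\log R(w)$ but into $\pm\sum_k\ln Q(z_k,w)$ plus stray $(g-1)\ln w$ terms and $w$-independent constants; combined with the $g\ln w$ from your $z=w$ residue, the $\ln w$ bookkeeping does not close without further input. Reassembling these pieces into $\ln X_g(w)$ still requires the factorization (\ref{pgf4}) (hence the injectivity lemma), so your route is neither more elementary nor shorter: it replaces one branch cut by $g$ of them and leaves the crucial matching step --- which you correctly flag as ``the main obstacle'' --- undone.
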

	Here, $\epsilon_0$ should satisfy the inequality $\epsilon_0<\min\{t_0,R_0\}$, where $t_0 = \sup\{t\in\mathbb{R}_+|Y^\prime(t)t-Y(t)\leq 0 \}$ and $R_0$ is the unique root with smallest modulus of $A(z)=z^g$ in $(1,\infty)$. Since $\E[Y] <g/c<1$, this root always exists, also in case $Y(z)=y_0+y_1z$.
	\setcounter{theorem}{0}
	\begin{remark}
		The formula \eqref{pk1} for $X_g(w)$ is essentially equivalent with
		\begin{equation}\label{pk2}
		X_g(w)=\exp\left(\frac{1}{2\pi i}\oint_{|z|=1+\epsilon}\ln\left(\frac{wY(z)-zY(w)}{Y(z)-z}\right)\,\frac{(z^g-A(z))'}{z^g-A(z)}\,\d z\right),
		\end{equation}
		except that the validity range is more delicate due to the more complicated argument of the $\ln$ in~\eqref{pk2}. Formula \eqref{pk1} follows upon manipulating \eqref{pk2} using partial integration (details in Section \ref{sec:proof}).
	\end{remark}

	\vspace{.3cm}
	\noindent{\bf Sketch of the proof.} The proof of Theorem \ref{mainttt} finds a way to go from representation
	\eqref{pgf} to contour integrals. A significant start in this direction was made by \cite{ahmad}, who
	rewrote \eqref{pgf} as
	\begin{equation}\label{pgf2}
	X_g(z)=\frac{(z-Y(z))z^{g-1}\sum_{k=0}^{g-1}q_k \left(\frac{Y(z)}{z}\right)^{g-1-k}}{z^g-A(z)}.
	\end{equation}
	Then denote the $g$ roots of $z^g=A(z)$ on and within the unit circle by $z_0=1,z_1,\ldots,z_{g-1}$.
	Now here is where the authors in \cite{ahmad} took an eye-opening step: instead of using the $g$ roots in the traditional manner for finding the unknowns $q_k$ and completing the transform \eqref{pgf}, use these roots for factorizing the numerator of \eqref{pgf}.
	Notice that this cannot be done immediately, because interpreted as a function of $z$, the numerator is by no means a polynomial of degree $g$ or less. However, by treating the function $Y(z)/z$ as a variable itself, the summation in the numerator is a polynomial of degree $g-1$ and can be factorized as
	\begin{equation}\label{pgf3}
	\sum_{k=0}^{g-1}q_k \left(\frac{Y(z)}{z}\right)^{g-1-k}=q_0\prod_{k=1}^{g-1}\left(\frac{Y(z)}{z}-\frac{Y(z_k)}{z_k}\right),
	\end{equation}
	using that $X_g(z)$ is well-defined in the disk $|z|\leq1$, that $z_1,...z_{g-1}$ are roots of the denominator and therefore also should be roots of the numerator, and that $Y(z)/z$ is injective (see Section \ref{sec:proof}).
	After normalization using $X_g(1)=1$ the factorization in \eqref{pgf3} leads to the representation
	\begin{equation}\label{pgf4}
	X_g(z)=\frac{g-A'(1)}{z^g-A(z)}\cdot \frac{z-Y(z)}{1-Y'(1)} \cdot z^{g-1}\prod_{k=1}^{g-1}\frac{Y(z)/z-Y(z_k)/z_k}{1-Y(z_k)/z_k}.
	\end{equation}
	Our proof then proceeds by interpreting \eqref{pgf4} as the outcome of Cauchy's residue theorem, the classical tool from complex analysis to evaluate line integrals of analytic functions over closed curves. An important step is to write
	\begin{equation}\label{pgf5}
	\ln \left(z^{g-1}\prod_{k=1}^{g-1}\frac{Y(z)/z-Y(z_k)/z_k}{1-Y(z_k)/z_k}\right)=\sum_{k=1}^{g-1}\ln\left(\frac{zY(z_k)-z_k Y(z)}{Y(z_k)-z_k}\right),
	\end{equation}
	and to regard \eqref{pgf5} as the sum of residues at $z=z_k$. To construct an analytic function that, in conjunction with Cauchy's theorem and the closed curve $|z|=1+\epsilon$, returns \eqref{pgf5} and has singularities at $z_1,\ldots,z_{g-1}$, leads us to consider the integrand in \eqref{pk2}. Here, the logarithmic function
	\begin{equation}\label{logg}
	\ln\left(\frac{wY(z)-zY(w)}{Y(z)-z}\right)
	\end{equation}
	follows from \eqref{pgf5} and the singularities with appropriate residues are created through $(z^g-A(z))'/(z^g-A(z))$.
	After careful consideration of the analytic properties of the integrand in \eqref{pk2}, we then show that Cauchy's theorem gives \eqref{pgf4} from which \eqref{pk2} follows. As said, \eqref{pk1} is obtained by manipulating \eqref{pk2}, using partial integration.
	The formal proof of Theorem \ref{mainttt} presented in Section \ref{sec:proof} contains several challenging steps, and requires among other things a proof that the function $Y(z)/z$ is injective in a region that contains the unit disk, and a way to account for the branch cut caused by the logarithm in \eqref{logg} being taken over negative values.
	
	\vspace{.3cm}
	\noindent{\bf Historical notes.} Integrals of this sort go a long way back in the history of queueing theory and were first found in the ground-breaking work of Pollaczek on the classical single-server queue (see \cite{awp,cohen,janssen2008back} for historical accounts). Let us point out the connection to the well known Pollaczek type integral for the discrete bulk-service queue \cite{britt1}, a discrete-time queueing model in which customers upon arrival are placed in a queue and after some stochastic service time, a (possibly stochastic) number of customers is served, all at once. Served customers leave the system immediately, whereas the remaining customers in the queue have to wait at least one more service period. The analysis of the bulk-service queue is easier than that of the FCTL queue. To see this, observe that the analysis of the FCTL queue is greatly simplified if all vehicles were delayed \cite{broek}, so that all vehicles arrive while
	the queue length is at least one and the complicated $A_n^d$ random variable in \eqref{overflow}
	can be replaced by $A_n$. In that case \eqref{overflow} becomes a standard stochastic recursion driven by
	i.i.d.~random variables and the FCTL queue reduces to the classical bulk-service queue, a special case of the more general single-server queue investigated by Pollaczek.
	Let $X_b$ denote the steady-state queue length in that bulk-service queue, defined as the solution of the stochastic equation
	\begin{equation}X_b\stackrel{d}{=}\max\{X_{b}+A-g,0\}.
	\end{equation}
	Pollaczek's result then says that (see \cite{janssen2018spitzer,britt1} for a direct derivation)
	\begin{equation}\label{pk}
	X_b(w)=\exp\left(\frac{1}{2\pi i}\oint_{|z|=1+\epsilon}\ln\left(\frac{w-z}{1-z}\right)\,\frac{(z^g-A(z))'}{z^g-A(z)}\,\d z\right)
	\end{equation}
	holds when $|w|<1+\epsilon$ with $\epsilon$ positive and bounded by some constant. Observe the striking similarity with \eqref{pk2}. While the FCTL queue is harder to analyze than the bulk-service queue, the two contour-integral representations \eqref{pk2} and \eqref{pk} only differ in the logarithmic function. The authors find this quite surprising  themselves, particularly because there seems no way to interpret the FCTL queue as a reflected random walk (that is, a recursive structure with i.i.d.~increments), while in the literature so far this seems to be a prerequisite for establishing Pollaczek-type contour integrals. Do observe that \eqref{pk} is valid in an area that includes the unit disk while \eqref{pk2} is guaranteed only in an open set containing $[0,1]$, see Section \ref{sec:proof}. This objection does not hold against the representation \eqref{pk1} of $X_g(w)$.
	
	
	\begin{remark}The bulk-service queue serves as a popular approximation of the FCTL queue {\normalfont\cite{broek}}. In fact, for Bernoulli arrivals with per time slot one or no arrival (which is case (i) in the proofs of Lemmas \ref{lem:injective} and \ref{lem:injective2}), this approximation becomes exact. To see this, substitute $Y(z)=1-p+pz$ into the logarithmic function in \eqref{pk2}, and observe that this gives the logarithmic function in \eqref{pk}. Obviously, when $Y$ can take values larger than one, the bulk-service queue is an approximation and yields an upper bound on the overflow queue.
	\end{remark}

	\subsection{Generalized FCTL queues} \label{general} Oblakova et al.~\cite{ahmad} have introduced generalized FCTL queues, and established contour integrals for the first moment of the steady-state queue length. We now show how contour integral representations for these generalized FCTL queues follow almost directly from the standard FCTL queue. We start from the definition of $X(z)$ in \cite{ahmad}, a generalization of the function $X_g(z)$ that contains as special cases several extensions of the FCTL queue.
	
	\setcounter{theorem}{1}
	\begin{definition}[Oblakova et al.~\cite{ahmad}]\label{as} Consider the function $X(z)$ with $X(1)=1$ and
		\begin{equation}\label{eq:Oblakova}
		X(z) = \frac{\sum_{k=0}^{g-1}x_kz^kB(z)^{g-1-k}}{z^g-A(z)}\xi(z),
		\end{equation}
		where $B(z)$ and $A(z)$ are PGFs and $\xi(z)$ is a function satisfying $\xi(1)=0$, $\xi(z_l)\neq0$ with $z_l\neq1$ the roots of $z^g-A(z)$ inside the unit disk. Assume moreover that $B^\prime(1)<1$; $A^\prime(1)<g$; that for some $\delta>0$ the functions $A(z)$ and $B(z)$ are analytic within the disk $|w|<1+\delta$; and that $X(z)$ is analytic inside the unit disk and continuous up to the unit circle. Also assume that $t_0>1$, where $t_0 = \sup\{t\in\mathbb{R}_+|B^\prime(t)t-B(t)\leq 0 \}$.
	\end{definition}
	Here is the main result for the function $X(z)$:
	\setcounter{theorem}{1}
	\begin{theorem}[Pollaczek integrals for generalized FCTL queues]\label{thm:extension} Under Definition \ref{as}
		\noindent there exists an $\epsilon_0>0$ such that for all $\epsilon\in(0,\epsilon_0)$
		\begin{equation}\label{eq:thm}
		X(z)=\exp\left(\frac{1}{2\pi i}\oint_{|w|=1+\epsilon}\ln\left(\frac{zB(w)-wB(z)}{B(w)-w}\right)\,\frac{(w^g-A(w))'}{w^g-A(w)}\,\d w\right)\frac{1-B^\prime(1)}{z-B(z)}\frac{\xi(z)}{\xi^\prime(1)},
		\end{equation}
		for all $|z|<1+\epsilon$, with principal value of the logarithm.
	\end{theorem}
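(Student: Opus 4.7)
The plan is to mirror the factorization-and-Cauchy strategy sketched for Theorem~\ref{mainttt} in the generalized setting, carrying $\xi(z)$ through the argument as a multiplicative spectator. After factorization and normalization, the formula for $X(z)$ will differ from the classical expression~\eqref{pgf4} only through the prefactor $\xi(z)/\xi'(1)\cdot(1-B'(1))/(z-B(z))$ and by a replacement of $Y$ by $B$; the latter is harmless since the Cauchy part of the proof of Theorem~\ref{mainttt} never exploits the identity $A=Y^c$. The genuine difficulty is the same as for the classical case: a branch-cut analysis of the logarithm at $w=1$.

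I would first establish the factorization. Analyticity of $X(z)$ on $|z|\le1$ combined with $\xi(z_l)\ne0$ forces $\sum_{k=0}^{g-1}x_kz^kB(z)^{g-1-k}$ to vanish at each of the $g-1$ roots $z_1,\dots,z_{g-1}$ of $z^g-A(z)$ in the closed unit disk other than $z=1$. The hypothesis $t_0>1$ implies that $B(z)/z$ is injective on a neighborhood of the closed unit disk (by the same argument that proves Lemma~\ref{lem:injective2}), so the values $B(z_k)/z_k$ are pairwise distinct, and writing the sum as $z^{g-1}$ times a polynomial of degree $g-1$ in $B(z)/z$ gives
\[
\sum_{k=0}^{g-1}x_kz^kB(z)^{g-1-k}=x_0\,z^{g-1}\prod_{k=1}^{g-1}\left(\frac{B(z)}{z}-\frac{B(z_k)}{z_k}\right).
\]
Applying L'H\^{o}pital at $z=1$ (using $\xi(1)=0$, $A(1)=1$, $A'(1)<g$ and $X(1)=1$) then determines $x_0$ and produces
\[
X(z)=\frac{g-A'(1)}{\xi'(1)\,(z^g-A(z))}\,\xi(z)\prod_{k=1}^{g-1}\frac{zB(z_k)-z_kB(z)}{B(z_k)-z_k}.
\]

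I would then apply Cauchy's residue theorem to the integrand $\ln\bigl((zB(w)-wB(z))/(B(w)-w)\bigr)\,(w^g-A(w))'/(w^g-A(w))$ on $|w|=1+\epsilon$, with $\epsilon$ small enough that the contour encloses exactly the $g$ roots $z_0=1,z_1,\dots,z_{g-1}$ of $w^g-A(w)$ and the principal-valued logarithm is well-defined on it. For $k=1,\dots,g-1$ the simple pole at $z_k$ produces residue $\ln\bigl((zB(z_k)-z_kB(z))/(B(z_k)-z_k)\bigr)$, which reproduces the logarithm of the $k$-th factor of the product displayed above. The only truly new work beyond transcribing the proof of Theorem~\ref{mainttt} is the contribution from $w=1$, which is the main obstacle: the expansions $B(w)-w=-(1-B'(1))(w-1)+O((w-1)^2)$ and $zB(w)-wB(z)=(z-B(z))+O(w-1)$ give a $\ln(w-1)$-type singularity of the logarithm superposed on the simple pole of $(w^g-A(w))'/(w^g-A(w))$, which I would handle by the same indentation of the contour around the branch cut of $\ln(w-1)$ used in the proof of Theorem~\ref{mainttt}. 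Once the principal-value conventions are accounted for, the net $w=1$ contribution is $\ln\bigl((g-A'(1))(z-B(z))/((1-B'(1))(z^g-A(z)))\bigr)$; substituting into Cauchy, exponentiating, and moving the resulting factor $(z-B(z))/(1-B'(1))$ to the left-hand side as its reciprocal (while $\xi(z)/\xi'(1)$ survives as an inert multiplicative prefactor) then yields~\eqref{eq:thm}.
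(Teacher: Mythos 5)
Your proposal is correct and follows essentially the same route as the paper: factorize the numerator of \eqref{eq:Oblakova} over the roots $z_1,\dots,z_{g-1}$ using injectivity of $B(z)/z$ and the normalization $X(1)=1$, then recognize the resulting product as the exponential of the Pollaczek contour integral. The only difference is presentational: the paper identifies the factorized form with $X_g(z)\,\frac{1-B'(1)}{z-B(z)}\,\frac{\xi(z)}{\xi'(1)}$ and invokes \eqref{pgf4} and \eqref{pk2} directly, whereas you re-run the Cauchy/branch-cut computation of Subsection \ref{subsect42} with $Y$ replaced by $B$ and a general $A$ --- your explicit observation that that computation never uses $A=Y^c$ is precisely the justification the paper leaves implicit.
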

	
	\begin{proof}
		We shall express \eqref{eq:Oblakova} as a product of the PGF of the standard FCTL queue and some analytic function.
		Denote the $g-1$ roots of $z^g-A(z)$ inside the unit circle by  $z_1,...,z_{g-1}$. We can then rewrite \eqref{eq:Oblakova}, using $X(1)=1$, as
		\begin{equation}
		X(z) = \frac{(g-A^\prime(1))}{z^g-A(z)} \frac{f(z)}{f^\prime(1)}\prod_{k=1}^{g-1} \frac{B(z)z_k-zB(z_k)}{B(z_k)-z_k}\label{eq:oblakovaREW2}.
		\end{equation}
		Setting $B(z)=Y(z)$, $A(z)=Y(z)^c$ and $f(z)=z-Y(z)$ we see from \eqref{pgf4} that
		\begin{equation}\label{eq:decomp}
		X(z) = X_g(z)\frac{1-B^\prime(1)}{z-B(z)}\frac{f(z)}{f^\prime(1)}.
		\end{equation}
		This gives the result.
	\end{proof}
	\noindent Let us now discuss the extensions contained in $X(z)$.
	
	(i) The first extension concerns the right-turning traffic, in which case the difference in discharge rate between delayed and non-delayed vehicles almost vanishes. This requires to modify the FCTL assumption in order to put an upper bound on the number of vehicles that pass the traffic light without delay. This upper bound is set to one, so that at most one vehicle can depart per green slot. Following \cite{ahmad}, it can be shown that this model for right-turning traffic follows by setting $B(z)=Y(z)$, $A(z)=Y(z)^c$ and $\xi(z)=(z-1)Y(0)$, and the contour integral expression for the PGF thus follows from Theorem \ref{thm:extension}.
	
	(ii) Another extension of the classical FCTL queue is one that accounts for disruptions of the traffic flow by e.g.~pedestrians. To account for these disruptions, one could extend the red period or shorten the green period for the main stream of vehicles \cite{ahmad}. This extension thus requires a FCTL queue with random (but finite) green and red times, for which we choose $g=G$ with $G$ denoting the maximum green time. Setting $B(z)=Y(z)$, $A(z)=\sum_{r,g}p_{r,g}Y(z)^{r+g}z^{G-g}$ with $p_{r,g}$  the probability that a cycle consists of $g$ green and $r$ red slots, and $\xi(z)=z-Y(z)$, then shows that also this extension of interrupted flows is contained in Theorem \ref{thm:extension}.
	
	(iii) The third extension we mention relates to uncertainty in departure times of vehicles, for instance due to distracted drivers facing a green light, and hence causing the driver to not depart from the queue with some probability $p$  \cite{ahmad}. Assuming a geometrically number of slots before a driver leaves the queue models this situation, with $B(z)=Y(z)(1-p+pz)$, $A(z)=Y(z)^c(1-p+pz)^g$ and $\xi(z)=z-Y(z)(1-p+pz)$.
	
	(iv) A fourth extension deals with relaxing the independence assumption of the arrival process during the red slots \cite{ahmad}. In this extension, the arrivals during a red time within a cycle may be dependent (the arrivals during green slots still need to be~i.i.d.). For this FCTL queue we should choose $B(z)=Y(z)$, $A(z)=A_r(z)Y(z)^g$, where $A_r(z)$ denotes the PGF of the arrival process during the whole red period, and $\xi(z)=z-Y(z)$.
	
	The present paper adds to \cite{ahmad} the PGFs in terms of contour integrals, of which the contour integrals for the mean queue length obtained in \cite{ahmad} follow by evaluating the derivative in one. For insights into the differences between the various FCTL queue extensions we refer to the elaborate numerical study in \cite{ahmad}.
	
	

	\section{Algorithmic methods}\label{sec:num}
	
	We now discuss the computational challenges that come with calculating the steady-state queue length distribution, using either the contour integrals in Theorems \ref{mainttt}  and  \ref{thm:extension} or the standard expression in terms of roots.
	The algorithms using contour integrals in this section are based on the representation \eqref{pk2} (but one could also take \eqref{pk1}). Notice that we only need to expand $X_g(w)$ at $w=0$ and $w=1$, so inside the validity range of \eqref{pk2}.
	
	%

	\subsection{From PGF to performance measures}
	The mean stationary overflow queue $\E X_g$ is given by $X_g'(1)$ and takes the form
	\begin{align} \label{e7}
	\E X_g&=\frac{1}{2\pi i}\oint_{|z|=1+\epsilon}\frac{Y(z)-z Y'(1)}{Y(z)-z }~\frac{(z^g-A(z))'}{z^g-A(z)}\,\d z.
	\end{align}
	This result was recently obtained in \cite{ahmad} using a direct proof that converted the classical expression for $\E X_g$ in terms of complex-valued roots into the integral expression \eqref{e7}.
	
	From the PGF $X_g(z)$ we can in principle determine all stationary moments.
	Define
	\begin{align}
	f(w)&:=\frac{1}{2\pi i}\oint_{|z|=1+\epsilon}g(w,z)\,\frac{(z^g-A(z))'}{z^g-A(z)}\,\d z,\\
	g(w,z)&:=\ln\left(\frac{wY(z)-zY(w)}{Y(z)-z}\right),\\
	h_k(w) &:= \begin{cases}
	1 & \qquad k = 0,\\
	h_{k-1}(w)f'(w)+h_{k-1}'(w) & \qquad k = 1, 2, \dots
	\end{cases}
	\end{align}
	The moments $\E[X_g^k]$ then follow from symbolically differentiating the PGF  \eqref{pk2}, and these derivatives can be expressed as
	\begin{equation}
	X_g^{(k)}(w) := \frac{\text{d}^k}{\text{d} w^k} X_g(w) =\frac{\text{d}^k}{\text{d} w^k} \exp\left(f(w)\right) = h_k(w)\exp\left(f(w)\right),
	\end{equation}
	for $k=0, 1,2,\dots$. Using this recursive expression, $X_g^{(k)}(w)$ can be expressed in terms of $f(w)$ and the first $k$ derivatives of $f(w)$, denoted by $f^{(1)}(w), \dots, f^{(k)}(w)$ with
	\begin{align}
	f^{(j)}(w)&:=\frac{\partial^j}{\partial w^j}  \frac{1}{2\pi i}\oint_{|z|=1+\epsilon}g(w,z)\,\frac{(z^g-A(z))'}{z^g-A(z)}\,\d z\nonumber\\
	&=\frac{1}{2\pi i}\oint_{|z|=1+\epsilon}g^{(j)}(w,z)\,\frac{(z^g-A(z))'}{z^g-A(z)}\,\d z
	\end{align}
	and $g^{(j)}(w,z):=\frac{\partial^j}{\partial w^j}g(w,z)$, for $j=1,2,\dots,k$.
	After substituting $w=1$, we can express the first $k$ moments of $X_g$ in terms of $k$ contour integrals that only involve the model primitives and the first $k$ moments of $Y$. Using $f(1)=0$, the variance of $X_g$ given by $\V(X_g)=h_2(1)+h_1(1)-(h_1(1))^2$ takes the form
	\begin{align} \label{e7v}
	\V(X_g)&=\frac{1}{2\pi i}\oint_{|z|=1+\epsilon}\frac{z^2\V(Y)-zY(z)(1+\E (Y^2)-2\E Y)}{(z-Y(z))^2}~\frac{(z^g-A(z))'}{z^g-A(z)}\,\d z.
	\end{align}

	To determine the stationary distribution of the overflow queue we use that
	\begin{equation}
	\P(X_g=k) = \frac{1}{k!}\frac{\text{d}^k}{\text{d} w^k} X_g(w)\Big|_{w=0}=\frac{1}{k!}h_k(0)\exp\left(f(0)\right).
	\end{equation}
	First observe that
	\begin{equation}
	\P(X_g=0) = \exp\left(f(0)\right) = \exp\left(\frac{1}{2\pi i}\oint_{|z|=1+\epsilon}\ln\left(\frac{z \P(Y=0)}{z-Y(z)}\right)\frac{(z^g-A(z))'}{z^g-A(z)}\,\d z\right).
	\end{equation}
	Expressions for the other probabilities $\P(X_g=k)$ follow in a similar way, but require evaluating the resulting function at $w=0$ instead of $w=1$ and dividing by $k!$. As a consequence, $\P(X_g=k)$ can be expressed in terms of $f(0), f^{(1)}(0), \dots, f^{(k)}(0)$, again an expression that involves explicit contour integrals only.

	
	{
		
		{
			
			\subsection{Roots or integrals?}\label{sec:examples}
			Compared with root finding, contour integrals have advantages and disadvantages. On the one hand, avoiding the implicitly defined roots is nice, because the integrals are explicit expressions in terms of the model primitives $g$, $r$  and $Y(z)$. On the other hand, the number of terms required to evaluate $f^{(j)}(w)$ grows exponentially in $j$. For tail probabilities this symbolic differentiation becomes computationally cumbersome.
			
			While in the early queueing literature root finding was considered to be prohibitively difficult, with the computational methods available nowadays it is possible to find the complex-valued roots of $z^g-A(z)$ with great accuracy. In Appendix \ref{app:alg} we present the root-finding algorithm that we use in this paper, which after extensive testing was found to be accurate and reliable for all choices of $A(z)$. The simple idea behind the algorithm is to approximate $A(z)$ with its Taylor series $A_n(z)$ of order $n$, reducing the problem to finding roots of polynomial equations, and subsequently use Newton's method to find the roots of $z^g-A(z)$ with arbitrary precision. We also present some results that show that the roots of the $n$-th system converge to the roots of $z^g-A(z)$, and provide an explicit characterization of the roots for the case when $A(z)$ is the PGF of a Poisson random variable. In that case, the roots can be written in terms of the Lambert W-function.
			
			Extensive tests with both algorithms did not result in any numerical issues, except for two obvious limitations: for tail probabilities the symbolic differentiation within the integrand becomes a bottleneck, and for root finding loss of accuracy is expected when the number of roots $g$ becomes excessively large (although a thousand roots present no difficulties). For most if not all practical purposes both methods lead to reliable and accurate algorithms.
			
			In terms of computation time, contour integration is generally slower than root finding. For moments there is little difference, because both methods lead to explicit expressions. For queue-length probabilities, however, numerical inversion of the PGF is required (see e.g.~\cite{abate1992numerical}), and in that case calculating the many contour integrals becomes computationally expensive. 

			\begin{figure}[t]
				\centering
				\begin{tabular}{cc}
					\includegraphics[width=0.47\textwidth]{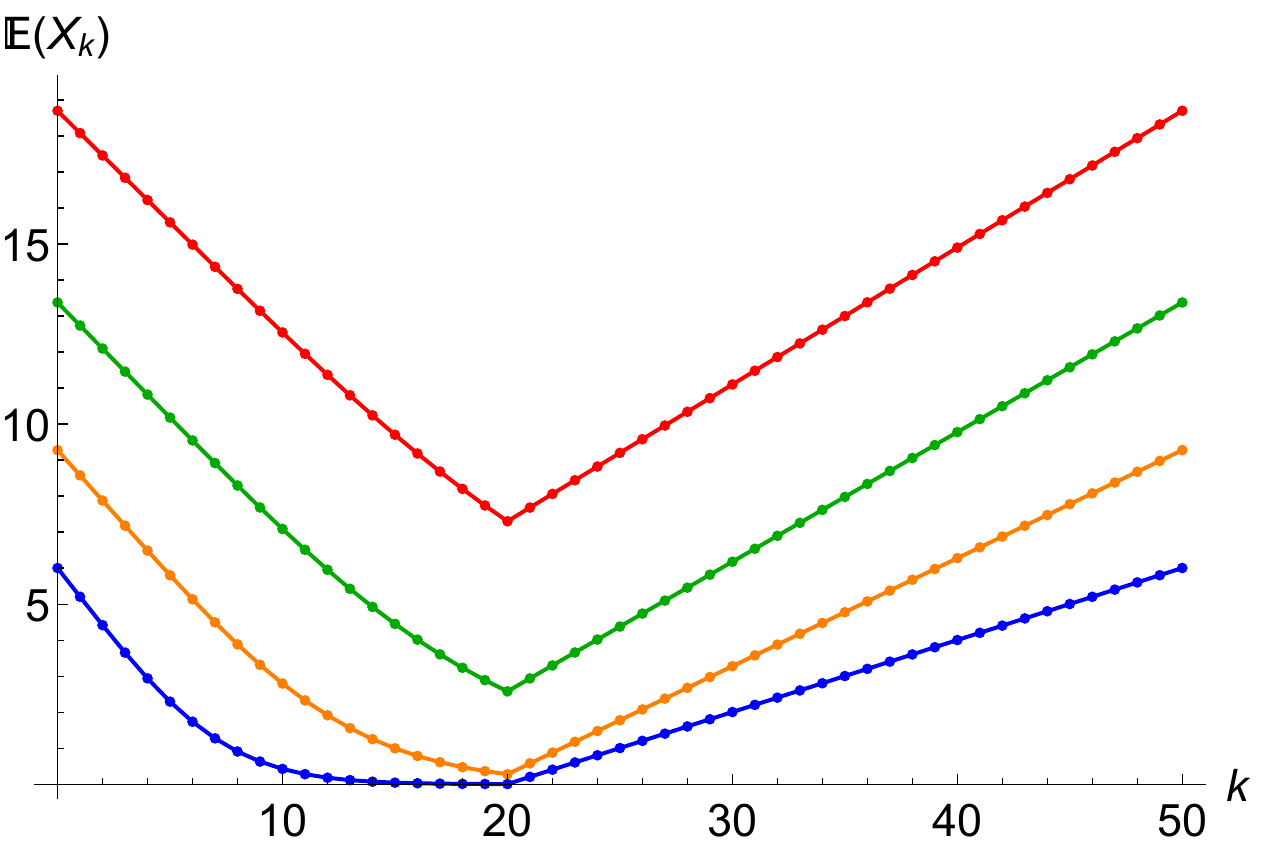}
					&
					\includegraphics[width=0.47\textwidth]{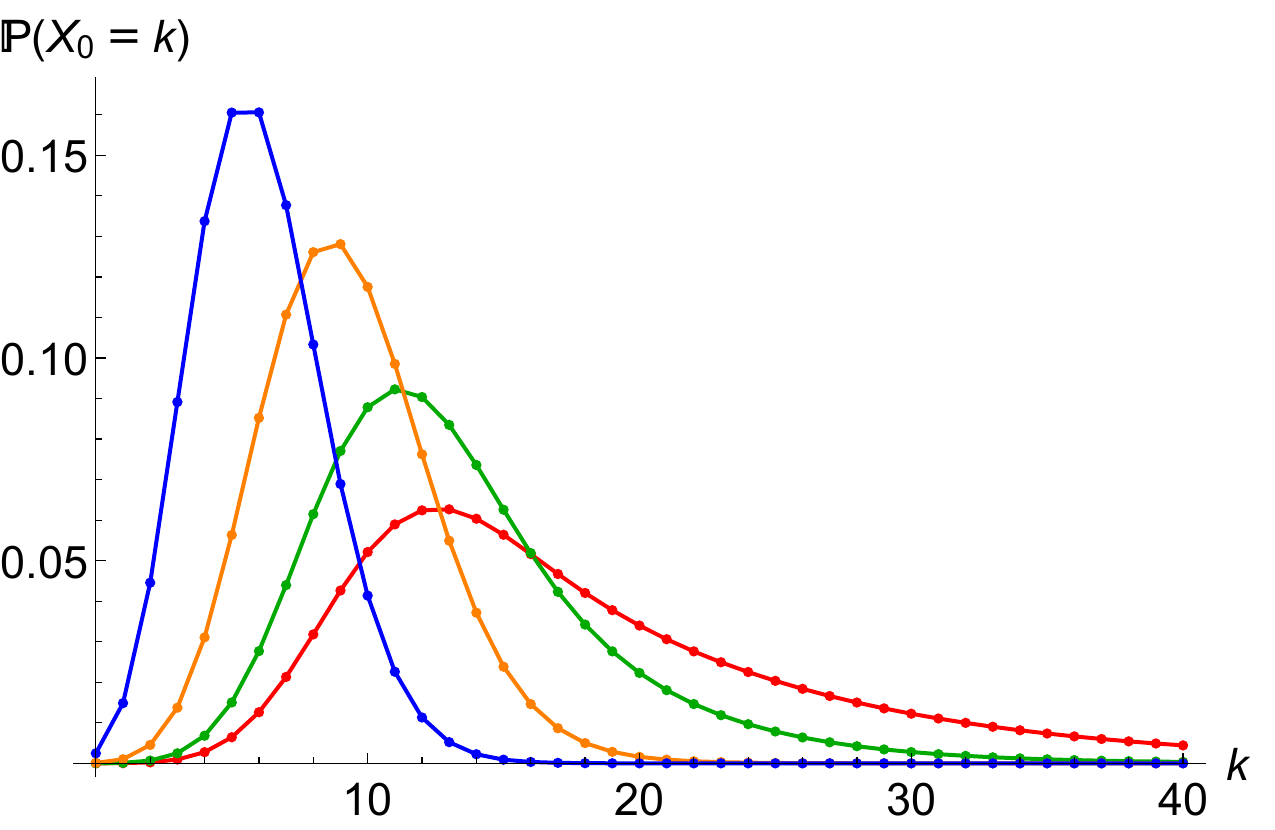}\\
					(a) & (b) \\[1ex]
					\includegraphics[width=0.47\textwidth]{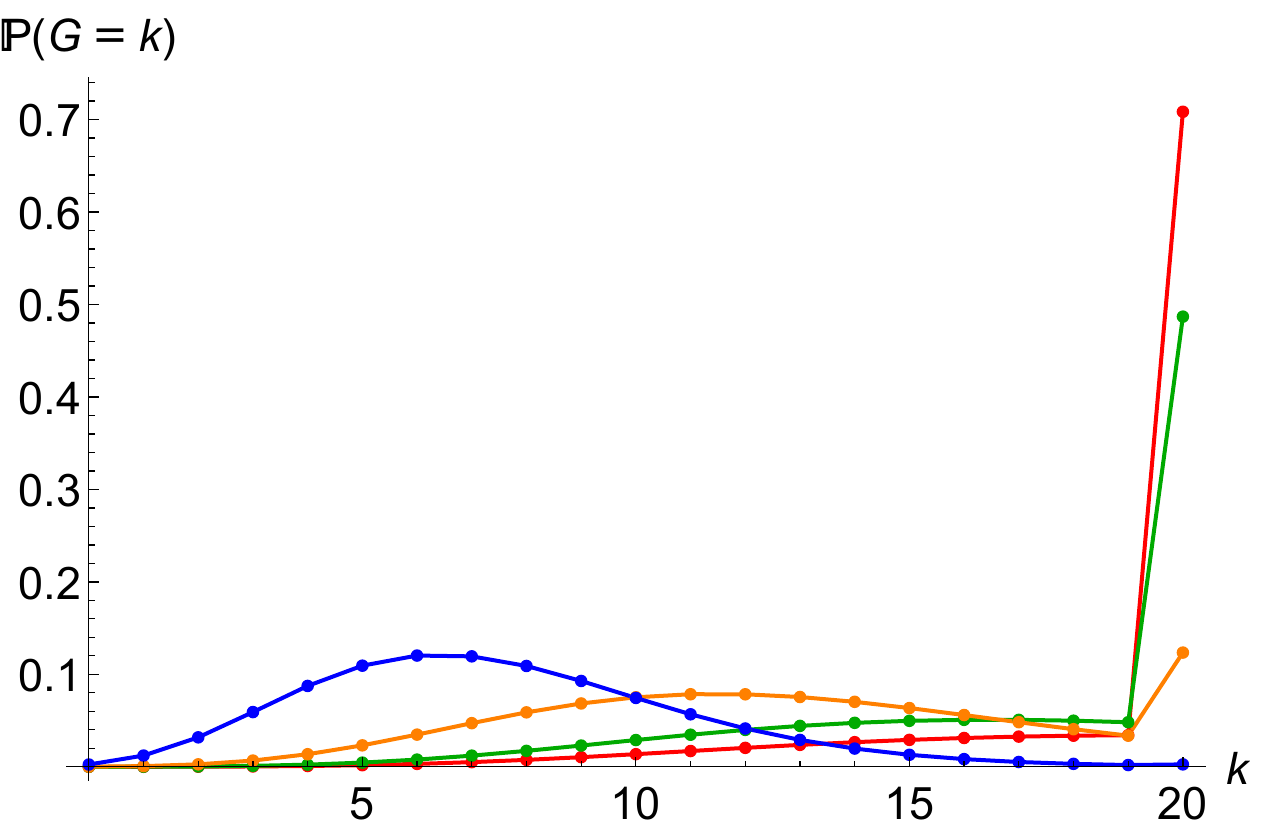}
					&
					\includegraphics[width=0.47\textwidth]{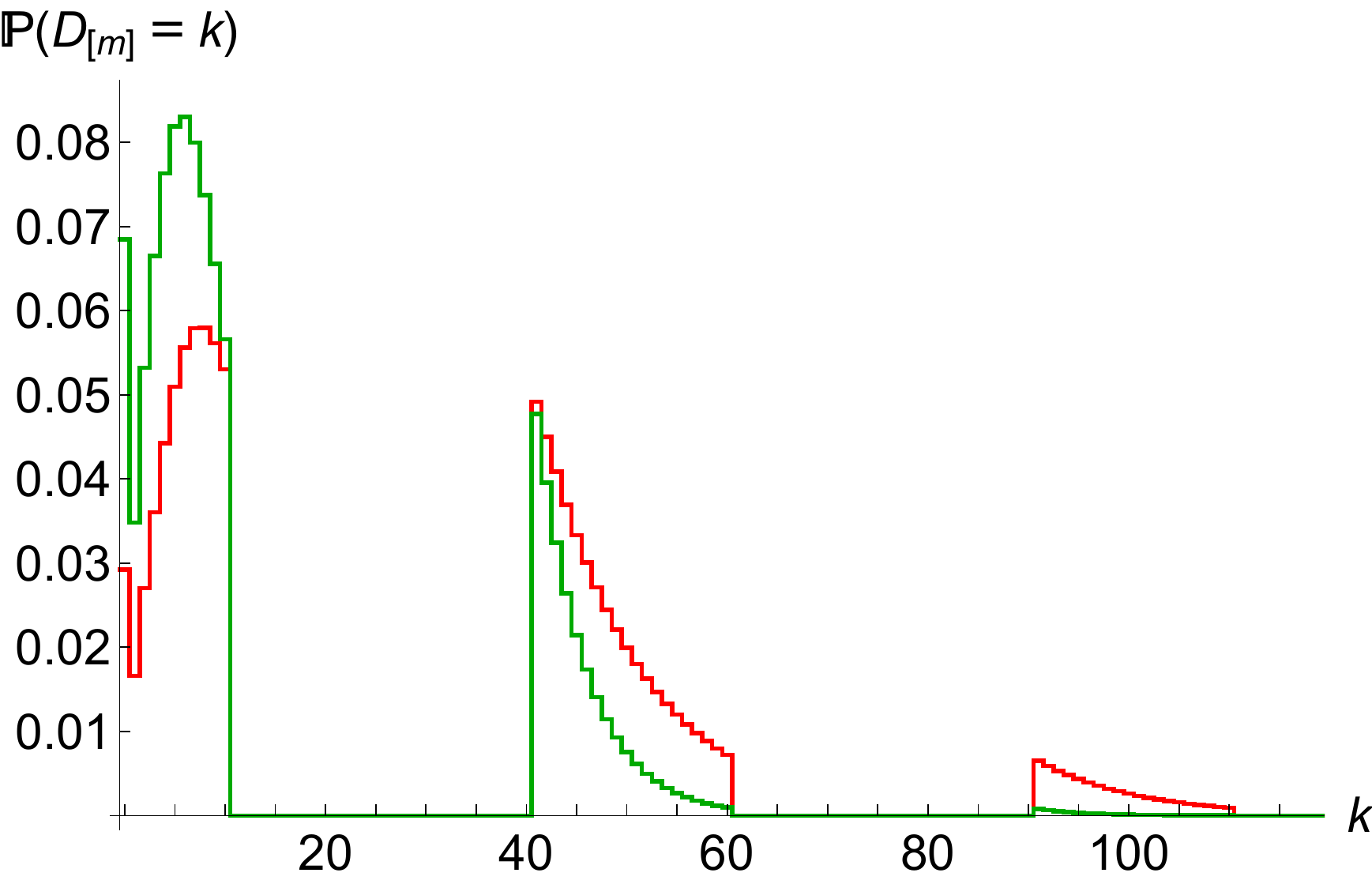}\\
					(c)  & (d)
				\end{tabular}
				\caption{Several performance measures for the FCTL queue in Section \ref{sec:examples} with $g=20$, $r=30$ and Poisson arrivals. The colors blue, orange, green and red correspond to volume over capacity ratios of, respectively, 0.5, 0.75, 0.9 and 0.95. The subfigures show (a) mean queue lengths during a cycle, (b) the queue length distribution at the start of green periods, (c) effective green periods and (d) delay distribution of vehicles arriving in slot 10 (for $\rho=0.9,0.95$ only).}
				\label{fig:numex}
			\end{figure}
			
			To illustrate the algorithms we now show some results for the FCTL queue with $g=20$ and $c=50$. We consider Poisson arrivals with on average $\lambda$ vehicles arriving per slot, and four scenarios: $\lambda=0.2$ (light traffic), $\lambda=0.3$ (moderate traffic), $\lambda=0.36$ (heavy traffic), and $\lambda=0.38$ (extreme traffic). These arrival rates correspond to a volume/capacity ratio $\rho=\lambda c /g$ ranging from 0.5 to 0.95. The results are calculated with both roots and contour integrals, and are on the scale of the displayed figures indistinguishable.

			
			Figure \ref{fig:numex}(a) shows the mean queue lengths $E[X_0], \dots, \E[X_c]$ through one cycle. Observe the strong cyclic behavior and the high sensitivity for $
			\rho$.
			Figure \ref{fig:numex}(b) shows the queue-length distribution at cycle start, the moment that the traffic signal turns green and queue lengths are expected to peak.
			Observe the difference between operating at 75\% or 90\% of maximal capacity: the probability that more than 20 vehicles are waiting is only $0.002$ for $\lambda=0.3$ and $0.32$ for $\lambda=0.38$.
			Figure \ref{fig:numex}(c) depicts the distribution of the effective green time $G$, defined in \cite{broek} as the number of slots used for departure of delayed vehicles that arrive throughout the whole cycle. We have
			\begin{equation}
			\P(G=k) = \begin{cases}
			q_0 & \qquad \text{ for }k=0,\\
			q_k - q_{k-1} & \qquad \text{ for }k=1, \dots, g-1,\\
			1 - q_{g-1} & \qquad \text{ for }k=g.
			\end{cases}
			\end{equation}
			Since only one delayed vehicle departs per slot, this can also be considered to be the distribution of the platoon length consisting of delayed vehicles departing during one cycle. Observe that $\P(G=g)$ is practically zero when $\rho=0.5$, but as high as $0.71$ when $\rho=0.95$, which means that only in 29\% of the cycles the green time is long enough to let the queue vanish.
			
			Finally, we consider the delay distribution of an arbitrary vehicle arriving in the 10-th slot, which is during the green period. The stationary delay of a vehicle arriving in slot $k$, denoted by $D_{[k]}$, is defined as the number of slots between arrival and departure, not including the slot of arrival. Figure \ref{fig:numex}(d) shows
			$D_{[10]}$, which can be computed directly from $X_{9}$, i.e., the number of vehicles waiting at the start of the 10-th slot. If $X_{9}=0$ we have that $D_{[10]}=0$; otherwise the delay can be expressed as a function of the number of vehicles present at the arrival of the tagged vehicle. This function (studied in detail in \cite{fctlsolo}) should take into account interruptions due to red periods, which explains the fragmented histograms in
			Figure \ref{fig:numex}(d).
			
			\section{Proof of the Pollaczek contour-integral representation}\label{sec:proof}
			As explained briefly in Section \ref{sec:main}, the proof of Theorem \ref{mainttt}
			exploits the factorized form \eqref{pgf4} and investigates in detail the logarithmic function
			\eqref{logg}. We present some useful properties of the function $Y(z)/z$, visible in both \eqref{pgf4} and \eqref{logg}. We then proceed to use Cauchy's theorem to obtain the contour-integral representation \eqref{pk2} for the case that $1<w<1+\epsilon$, and finally manipulate \eqref{pk2} to obtain \eqref{pk1} on the full range $|w|<1+\epsilon$.
			
			\subsection{Auxiliary results}
			Before we prove Theorem \ref{mainttt} we present some auxiliary results for the function $Y(z)/z$.
			In \cite[Theorem 1]{ahmad} it was shown that the function $Y(z)/z$ is injective on the disk $|z|\leq 1$, so that all 
			$Y(z_k)/z_k \neq Y(z_l)/z_l$ when $z_k\neq z_l$.
			For our proof we also need injectivity, but then for the larger disk with radius $t_0>1$. More specifically,
			let
			\begin{align}\label{eqnA3}
			t_0 :&= \sup\{t\in(0,R)\,\,|\,\,Y'(t)t-Y(t)\leq 0\},
			\end{align}
			where $R$ is the maximum value such that $Y(z)$ is analytic in the region $|z|<R$.
			
			\begin{lemma}\label{lem:injective}
				The function $Y(t)/t$ is strictly decreasing in $t\in(0,t_0]$.
			\end{lemma}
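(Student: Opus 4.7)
The plan is to reduce everything to the sign of the single auxiliary function $\phi(t) := Y'(t)\,t - Y(t)$, since the quotient rule gives
\[
\left(\frac{Y(t)}{t}\right)' \;=\; \frac{\phi(t)}{t^2}, \qquad t>0,
\]
so strict decrease of $Y(t)/t$ on $(0,t_0]$ will follow once I establish $\phi(t)<0$ on $(0,t_0)$ together with continuity at $t_0$. Note that $t_0$ is defined as the supremum of exactly the set where $\phi\leq 0$, so half of the work is already encoded in the statement; the real task is to turn that weak inequality into a strict one.

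First I would evaluate $\phi$ at the left endpoint and differentiate. Expanding $Y(t)=\sum_{k\geq 0}p_k t^k$ gives $\phi(0)=-p_0=-\P(Y=0)<0$ by the standing assumption. Differentiating, $\phi'(t)=Y''(t)\,t$, and every term in $Y''(t)=\sum_{k\geq 2}k(k-1)p_k t^{k-2}$ is nonnegative on $[0,R)$, so $\phi$ is nondecreasing on $[0,R)$.

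Next I would split two cases depending on whether $Y$ is affine. If $Y(t)=p_0+p_1 t$, then $\phi(t)\equiv -p_0<0$ for all $t$, and the quotient-rule identity finishes the proof immediately with $t_0=R=\infty$. Otherwise $Y''$ is a nonzero analytic function on $|z|<R$, hence has only isolated zeros, so $\phi'(t)>0$ off a discrete set and $\phi$ is in fact \emph{strictly} increasing. Combined with $\phi(0)<0$ this means $\phi$ has at most one zero in $(0,R)$; continuity together with the supremum definition of $t_0$ force that zero (if it lies in $(0,R)$) to be precisely $t_0$, and $\phi<0$ on $(0,t_0)$. Either way, $\phi<0$ on $(0,t_0)$.

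It remains to upgrade strict decrease from the open interval $(0,t_0)$ to $(0,t_0]$. Because $Y$ is analytic on $|z|<R$ and $t_0\leq R$, the function $Y(t)/t$ is continuous at $t_0$; for any $s\in(0,t_0)$, picking any intermediate $t$ with $s<t<t_0$ and applying strict decrease on $(0,t_0)$ together with left-continuity at $t_0$ yields $Y(s)/s>Y(t)/t\geq Y(t_0)/t_0$, and in fact $Y(t)/t>Y(t_0)/t_0$ because $Y(t)/t$ is strictly decreasing on a neighborhood $(t,t_0)\subset(0,t_0)$ and continuous at $t_0$. The only step beyond routine calculus is the analyticity argument ruling out that $Y''$, and hence $\phi'$, vanishes identically on an interval; this is the one place where I expect to spend a sentence of care, and it is exactly what turns the nonstrict inequality built into the definition of $t_0$ into the strict monotonicity claimed by the lemma.
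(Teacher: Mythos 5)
Your proof is correct and follows essentially the same route as the paper: both reduce to the sign of $Y'(t)t-Y(t)$, split into the affine case and the case where some $y_k>0$ with $k\geq 2$, and use strict monotonicity of that auxiliary function together with the definition of $t_0$ (the paper shows strict increase directly from the power series $-y_0+\sum_{k\geq2}(k-1)y_kt^k$ rather than via isolated zeros of $Y''$, but this is the same substance). Your explicit handling of the endpoint $t=t_0$ is a small point the paper glosses over.
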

			
			\begin{proof}
				We have that
				\begin{equation}\label{eqnA1}
				\frac{Y(t)}{t}=\frac{y_0}{t}+y_1+y_2t+\dots \,,\qquad 0<t<R,
				\end{equation}
				is strictly convex since $y_0>0$, with derivative
				\begin{equation}\label{eqnA2}
				\left(\frac{Y(t)}{t}\right)'=\frac{Y'(t)t-Y(t)}{t^2},\qquad 0<t<R.
				\end{equation}
				
				Since $Y'(1)<Y(1)=1$, we have that $t_0>1$.
				Now consider the following cases: (i) $y_k=0$ for $k=2,3,\dots$, (ii) there is a $k=2,3,\dots$ such that $y_k\neq 0$. For case (i) $Y(t)/t =y_0t^{-1}+y_1$ is strictly decreasing in $t>0$ since $y_0>0$.
				For case (ii), $y_k>0$ for some $k\geq2$, and so
				\begin{equation}\label{eqnA4}
				Y'(t)t-Y(t)=-y_0+\sum_{k=2}^\infty (k-1)y_kt^k
				\end{equation}
				is strictly increasing in $t\in(0,R)$. From the definition of $t_0$, we then get that
				\begin{equation}\label{eqnA5}
				Y'(t)t-Y(t)<0, \quad t\in(0,t_0),
				\end{equation}
				and so $Y(t)/t$ is strictly decreasing in $t\in(0,t_0)$ by \eqref{eqnA2}.
			\end{proof}

			\begin{lemma}\label{lem:injective2}
				The function $Y(z)/z$ is injective on the open disk $|z|<t_0$, so that for $|z|<t_0$, $|w|<t_0$
				\begin{equation}\label{eqnA6}
				\frac{Y(z)}{z}=\frac{Y(w)}{w} \Rightarrow z=w.
				\end{equation}
			\end{lemma}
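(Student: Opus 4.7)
The strategy is to extract the factor $(z-w)$ from $zY(w)-wY(z)$ explicitly and then to show that the remaining analytic function has no zeros on the bidisk $\{|z|,|w|<t_0\}$, thereby reducing everything to the real-line information furnished by Lemma~\ref{lem:injective}. The cases where $z$ or $w$ equals $0$ are immediate, since $Y(\cdot)/\cdot$ has a simple pole at the origin and is finite elsewhere in the disk; so I would assume from the outset that $0<|z|,|w|<t_0$ and aim to show that $z\ne w$ forces $zY(w)-wY(z)\ne 0$.

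Expanding $Y(u)=\sum_{k\geq 0}y_k u^k$, the $k=0$ term contributes $y_0(z-w)$ and the $k=1$ term vanishes, while for each $k\geq 2$ the geometric-series identity $zw^k-z^kw=-zw(z-w)\sum_{i=0}^{k-2}z^i w^{k-2-i}$ lets me pull out a common factor of $(z-w)$. Regrouping yields the factorization
\[
zY(w)-wY(z)=(z-w)\,g(z,w),\qquad g(z,w):=y_0-zw\sum_{k\geq 2}y_k\sum_{i=0}^{k-2}z^iw^{k-2-i},
\]
and the double series defining $g$ converges absolutely on $\{|z|,|w|<R\}$, so $g$ is analytic there.

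Using nonnegativity of the $y_k$ and writing $s:=\max(|z|,|w|)<t_0$, the triangle inequality gives
\[
|g(z,w)|\;\geq\;y_0-s^2\sum_{k\geq 2}(k-1)y_k s^{k-2}\;=\;Y(s)-sY'(s),
\]
the final equality being the routine power-series expansion of $Y(s)-sY'(s)$. By the definition of $t_0$ together with the case analysis already carried out in the proof of Lemma~\ref{lem:injective}, $Y(s)-sY'(s)>0$ for every $s\in(0,t_0)$ in case (ii), while in case (i) the double sum is empty and $g(z,w)\equiv y_0>0$. Either way $g(z,w)\ne 0$, and since $z\ne w$ this forces $zY(w)-wY(z)\ne 0$, which is equivalent to \eqref{eqnA6}. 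The main technical obstacle will be the bookkeeping in the factorization step: one has to verify that extracting $(z-w)$ termwise from the infinite series for $zY(w)-wY(z)$ produces an absolutely convergent double series for $g$ on the relevant bidisk; once this is in place, the remainder of the argument is a direct application of Lemma~\ref{lem:injective} and the triangle inequality.
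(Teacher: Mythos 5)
Your proof is correct and is essentially the paper's own argument: the paper likewise splits into the cases $Y$ linear versus $y_k>0$ for some $k\geq 2$, extracts the factor $z-w$ from $Y(z)/z-Y(w)/w$ (equivalently from $wY(z)-zY(w)$, differing from yours only by the factor $zw$), bounds the cofactor below by $y_0/t^2-\sum_{k\geq 2}(k-1)y_kt^{k-2}$ with $t=\max\{|z|,|w|\}$ via the same estimate $\bigl|\sum_{i=0}^{k-2}z^iw^{k-2-i}\bigr|\leq(k-1)t^{k-2}$, and concludes positivity from \eqref{eqnA4}--\eqref{eqnA5}. The absolute-convergence bookkeeping you flag is immediate from analyticity of $Y$ in $|z|<R$, so there is no gap.
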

			\begin{proof}
				In case (i), $y_k=0$ for $k=2,3,\dots$, we have $Y(z)/z=y_0z^{-1}+y_1$ and the result is trivial since $y_0>0$. For case (ii), there is a $k=2,3,\dots$ such that $y_k\neq0$, we let $|z|<t_0$, $|w|<t_0$. Then
				\begin{align}\label{eqnA7}
				\left|\frac{Y(z)}{z}-\frac{Y(w)}{w}\right| &= \Big|y_0\frac{w-z}{zw}+\sum_{k=2}^\infty y_k(z^{k-1}-w^{k-1})\Big|\nonumber\\
				&= |z-w|\,\Big|-\frac{y_0}{zw}+\sum_{k=2}^\infty y_k\frac{z^{k-1}-w^{k-1}}{z-w}\Big|.
				\end{align}
				Let $t:=\max\{|z|, |w|\}<t_0$. Then $|y_0/(zw)|\geq y_0/t^2$ while
				\begin{equation}\label{eqnA8}
				\Big|\frac{z^{k-1}-w^{k-1}}{z-w}\Big|=\left|z^{k-2}+wz^{k-3}+\dots+zw^{k-3}+w^{k-2}\right|\leq(k-1)t^{k-2}.
				\end{equation}
				Therefore, when $z\neq w$,
				\begin{align}\label{eqnA7b}
				\left|\frac{Y(z)}{z}-\frac{Y(w)}{w}\right| \geq |z-w|\Big(\frac{y_0}{t^2}-\sum_{k=2}^\infty(k-1)y_kt^{k-2}\Big)>0
				\end{align}
				by \eqref{eqnA4} and \eqref{eqnA5}. This proves \eqref{eqnA6}.
			\end{proof}

			
			\begin{lemma}\label{lem:nozero}
				Let $\epsilon>0$ be such that $1+\epsilon < t_0$, and take $w \in (1, 1+\epsilon)$. For $|z|<t_0$,
				\begin{equation}\label{eqn210}
				\frac{w Y(z)-z Y(w)}{Y(z)-z}\in (-\infty, 0] \, \Leftrightarrow\, 1\leq z\leq w,
				\end{equation}
				i.e., the only $z$ in the disk with radius $t_0$ for which the fraction in \eqref{eqn210} is negative satisfy $1\leq z\leq w$.
				
				Furthermore
				\begin{equation}\label{eq:greaterzero}
				-1 < z < 1\, \Rightarrow\, \frac{wY(z)-zY(w)}{Y(z)-z}>0.
				\end{equation}
			\end{lemma}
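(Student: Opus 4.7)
The plan is to reduce the whole question to the behavior of the function $h(z) := Y(z)/z$, whose monotonicity on $(0,t_0)$ and injectivity on $|z|<t_0$ were just established in Lemmas~\ref{lem:injective} and~\ref{lem:injective2}. Writing $Y(z)=z\,h(z)$ and $Y(w)=w\,h(w)$, a direct algebraic manipulation gives
\[
\frac{wY(z)-zY(w)}{Y(z)-z} \;=\; w\,\frac{h(z)-h(w)}{h(z)-1},
\]
so that the fraction of interest is the image of $v := h(z)$ under the M\"obius transformation $\mu(v) := w(v-h(w))/(v-1)$.

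To pin down the preimage of $(-\infty,0]$ under $\mu$, I would solve $\mu(v)=\lambda$ for $\lambda \leq 0$, which yields $v = (wh(w)-\lambda)/(w-\lambda)$. Since $w>1$ and $h$ is strictly decreasing on $(0,t_0)$ with $h(1)=1$, we have $0 < h(w) < 1$; a one-line derivative check then shows that $\lambda \mapsto v$ is strictly decreasing on $(-\infty,0]$, with $v(0)=h(w)$ and $v(\lambda)\to 1$ as $\lambda\to-\infty$. Hence this map is a bijection of $(-\infty,0]$ onto $[h(w),1)$, and the original fraction lies in $(-\infty,0]$ if and only if $h(z)\in[h(w),1)$.

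It remains to lift this back to $z$. By Lemma~\ref{lem:injective}, $h$ restricted to the real interval $(1,w]$ is a continuous strictly decreasing bijection onto $[h(w),1)$. By Lemma~\ref{lem:injective2}, $h$ is injective on the entire open disk $|z|<t_0$, so any complex $z$ in this disk with $h(z)\in[h(w),1)$ must coincide with the (unique) real preimage; thus $h^{-1}([h(w),1))\cap\{|z|<t_0\} = (1,w]$. This gives the equivalence \eqref{eqn210}. The endpoint $z=1$ is appended by continuity: there the denominator $Y(z)-z$ vanishes while the numerator equals $w-Y(w)>0$ (using $Y'(1)<1$), so the fraction has a pole on the boundary of the preimage.

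For the second claim, let $z$ be real with $-1<z<1$. Injectivity of $h$ on $|z|<t_0$ gives $h(z)\neq h(1)=1$, so $Y(z)-z \neq 0$ and the fraction is a finite real number. The equivalence just proved rules out membership in $(-\infty,0]$, so the fraction is strictly positive, which is \eqref{eq:greaterzero}. The main delicate point throughout is the treatment of $z=1$, where the pole of the fraction, the zero of $h(z)-1$, and the endpoints of the real intervals in Lemmas~\ref{lem:injective} and~\ref{lem:injective2} all meet; one must work on the \emph{open} disk $|z|<t_0$ and invoke the boundary value $z=1$ only by continuity.
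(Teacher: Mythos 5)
Your argument is correct and follows essentially the same route as the paper: both invert the M\"obius-type relation by solving $\frac{wY(z)-zY(w)}{Y(z)-z}=\lambda$ for $Y(z)/z=(Y(w)-\lambda)/(w-\lambda)$, identify the relevant preimage set as $[Y(w)/w,1)$ via monotonicity in $\lambda$, and then use the strict decrease of $Y(v)/v$ on $[1,w]$ (Lemma~\ref{lem:injective}) together with injectivity on the disk (Lemma~\ref{lem:injective2}) to conclude that the only such $z$ are the real points of $(1,w]$. Your derivation of \eqref{eq:greaterzero} as a corollary of \eqref{eqn210} plus realness is slightly slicker than the paper's separate sign analysis on $(0,1)$ and $(-1,0)$; the only point to add is the trivial case $z=0$, where $Y(z)/z$ is undefined but the fraction equals $w>0$ directly.
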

			\begin{proof}
				For $a\leq 0$,
				\begin{equation}\label{eqn42}
				\frac{w Y(z)-z Y(w)}{Y(z)-z}=a \, \Leftrightarrow\, \frac{Y(z)}{z}=\frac{Y(w)-a}{w-a}.
				\end{equation}
				Since $1<Y(w)<w$, the function $(Y(w)-a)/(w-a)$ increases from $Y(w)/w$ at $a=0$ to 1 at $a=-\infty$ when $a$ decreases from 0 to $-\infty$. Since $Y(v)/v$ decreases strictly in $v\in [1,w]$, there is for any $a\leq 0$ a unique $v=v(a) \in [1,w]$ such that
				\begin{equation}\label{eqn43}
				\frac{Y(v)}{v}=\frac{Y(w)-a}{w-a}.
				\end{equation}
				Since by Lemma \ref{lem:injective2} $Y(z)/z$ is injective in $|z|<t_0$ we get \eqref{eqn210}.
				
				We next show \eqref{eq:greaterzero}. Obviously, \eqref{eq:greaterzero} holds for $z=0$. For $z\neq0$, we have
				\begin{equation}
				\frac{wY(z)-zY(w)}{Y(z)-z} = \frac{Y(z)/z-Y(w)/w}{Y(z)/z-1}w.
				\end{equation}
				By Lemma \ref{lem:injective}, we have
				\begin{equation}
				\frac{Y(z)}{z}-\frac{Y(w)}{w} > \frac{Y(z)}{z}-1>0
				\end{equation}
				when $0<z<1<w<t_0$, and so \eqref{eq:greaterzero} holds for $z\in(0,1)$. Next, by Lemma \ref{lem:injective2}, we have $Y(z)/z\neq Y(w)/w$ when $z\in(-1,0)$ and $1<w<t_0$. Also, $Y(t)/t\to-\infty$ when $t\uparrow0$. By realness and continuity of $Y(t)/t$ in $t\in(-1,0)$ we thus have that
				\begin{equation}
				\frac{Y(z)}{z} < \frac{Y(w)}{w} < 1, \qquad z\in(-1,0),
				\end{equation}
				and so \eqref{eq:greaterzero} also holds for $z\in(-1,0)$.
			\end{proof}
			
			As a consequence of Lemma \ref{lem:nozero}, taking the principal value logarithm in \eqref{logg} when $1<w<1+\epsilon<t_0$, we obtain a function of $z$ that is analytic in the open disk $|z|<t_0$, with branch cut $[1,w]$.
			
			\subsection{Contour integral for (\ref{pk2})}\label{subsect42}

			We next consider the function $z^g-A(z)$ that has its zeros in $|z|\leq 1$ at $z=z_0=1, z_1, \dots, z_{g-1}$, while its other zeros have modulus greater than one. Let $R_0$ be the zero outside $|z|\leq 1$ of smallest modulus; we have $R_0$ is real and larger than one. Take $\epsilon>0$ such that $1+\epsilon < \min\{t_0, R_0\}$ and consider the integral
			\begin{equation}\label{eqn44}
			I(w)=\frac{1}{2\pi i}\oint_{|z|=1+\epsilon}\ln\left(\frac{wY(z)-zY(w)}{Y(z)-z}\right)\,\frac{(z^g-A(z))'}{z^g-A(z)}\,\d z.
			\end{equation}
			\begin{figure}[t]
				\begin{center}
					\begin{tikzpicture}[scale=3]
					\draw[blue, very thick](1,0) circle [radius=0.3];
					\draw[blue, very thick](2,0) circle [radius=0.3];
					\draw[white,fill=white](0.9,-0.02) rectangle (2.2,0.02);
					\draw[blue, thick](1.293,-0.02) -- (1.707,-0.02);
					\draw[blue, thick](1.293,0.02) -- (1.707,0.02);
					\node[above right] at (1,0.25) {$C_1$};
					\node[above right] at (2,0.25) {$C_w$};
					\draw[black](1,0) -- (1.21,0.21);
					\node[right] at (1,0.17) {\small$\delta$};
					\draw[black](2,0) -- (2.21,0.21);
					\node[right] at (2,0.17) {\small$\delta$};
					
					\draw[black](0,0) -- (2.7,0);
					\draw[black](0,0) circle [radius=1];
					\node[below] at (0,0) {$0$};
					\node[below left] at (1.05,0.03) {$1$};
					\draw[black](0,0.03) -- (0,-0.03);
					\draw[black](1,0.03) -- (1,-0.03);
					\draw[black](2,0.03) -- (2,-0.03);
					\draw[black](2.5,0.03) -- (2.5,-0.03);
					\node[below ] at (2.5,-0.0) {$1+\epsilon$};
					\node[below] at (2,0) {$w$};
					\draw[thick,blue,->] (1.51,0.02)--(1.5,0.02);
					\draw[thick,blue,<-] (1.51,-0.02)--(1.5,-0.02);
					\draw[thick,blue,->] (0.7,0.01)--(0.7,0);
					\draw[thick,blue,->] (2.3,-0.01)--(2.3,0);
					\draw[ultra thick,black,dash pattern=on \pgflinewidth off 1.03cm] (0.7,0.35) to [out=130,in=90] (-0.6,0)  ;
					\draw[ultra thick,black,dash pattern=on \pgflinewidth off 1.03cm] (0.7,-0.35) to [out=230,in=-90] (-0.6,0);
					\node[above] at (0.7,0.35) {$z_1$};
					\node[above] at (0.4,0.55) {$z_2$};
					\node[below right] at (0.6,-0.35) {$z_{g-1}$};
					\node[below right] at (0.3,-0.55) {$z_{g-2}$};
					\node[above] at (1.5,0.02) {\small $L_+$};
					\node[below] at (1.5,-0.02) {\small $L_-$};
					\end{tikzpicture}
				\end{center}
				\caption{The four components, $C_1$, $C_w$, $L_+$ and $L_-$, of contour $C$.}
				\label{fig:contours}
			\end{figure}
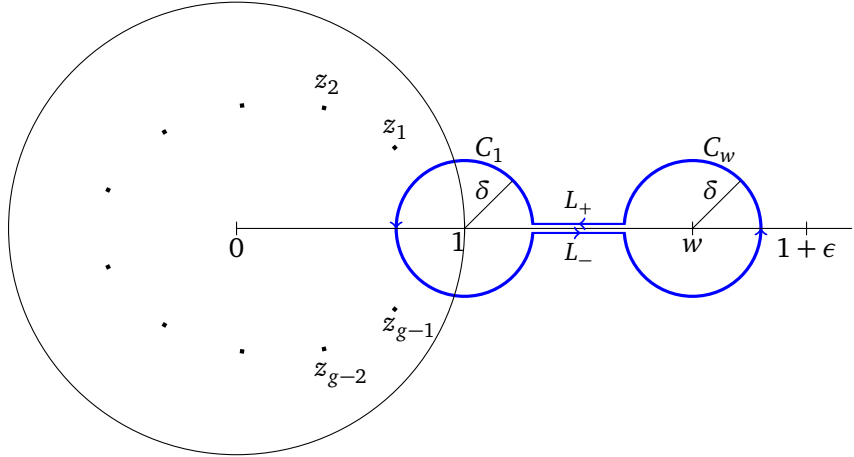

			Choose $\delta>0$ such that $\delta<\frac12 (w-1)$ and $\delta<1+\epsilon-w$ while $|z_k-1|>\delta, k=1,\dots,g-1$. Now let $C$ be the positively oriented contour consisting of the circles $C_1(\delta)$ and $C_w(\delta)$ of radii $\delta$ around 1 and $w$, respectively, together with the line segments $L_{\pm}(\delta)=\{z = t \pm i 0 \,\,|\,\,1+\delta\leq t\leq w-\delta\}$, where $\pm i0:=\lim_{c\downarrow0} \pm ci$. See Figure \ref{fig:contours} for the positioning of the contour $C$ with its four components in the disk $|z|<1+\epsilon$ and relative to the zeros of $z^g-A(z)$. Then, by Cauchy's theorem,
			\begin{align}\label{eqn45}
			I(w)&=\sum_{k=1}^{g-1}\ln\left(\frac{wY(z_k)-z_kY(w)}{Y(z_k)-z_k}\right) + \frac{1}{2\pi i}\oint_{C}\ln\left(\frac{wY(z)-zY(w)}{Y(z)-z}\right)\,\frac{(z^g-A(z))'}{z^g-A(z)}\,\d z.
			\end{align}
			On the line segments $z=t\pm i 0$, $1+\delta\leq t\leq w-\delta$, we use that
			\begin{equation}\label{eqn46}
			w Y(t) - t Y(w) >0 >Y(t)-t.
			\end{equation}
			With the principal value choice for $\ln$, we then get (with $Y(t\pm i 0) = \lim_{c\downarrow0}Y(t \pm i c)$)
			\begin{align}\label{eqn47}
			\ln\left(\frac{wY(t\pm i 0)-(t\pm i 0) Y(w)}{Y(t\pm i 0)-(t\pm i 0)}\right) &= \ln\left(\frac{wY(t)-t Y(w)}{t-Y(t)}\right)\pm \pi i, \quad 1+\delta\leq t\leq w-\delta.
			\end{align}
			Therefore, also using that $t^g - A(t) > 0$, $1<t<1+\epsilon$,
			\begin{align}\label{eqn48}
			&\frac{1}{2\pi i}\oint_{C}\ln\left(\frac{wY(z)-zY(w)}{Y(z)-z}\right)\,\frac{(z^g-A(z))'}{z^g-A(z)}\,\d z\nonumber\\
			=\,&\frac{1}{2\pi i}\int_{1+\delta}^{w-\delta}\left[-\big({\rm ln}\left(\frac{wY(t)-t Y(w)}{t-Y(t)}\right)+\pi i\big)+\big(\ln\left(\frac{wY(t)-t Y(w)}{t-Y(t)}\right)-\pi i\big)\right]\,\frac{(t^g-A(t))'}{t^g-A(t)}\,\d t\nonumber\\
			&+\frac{1}{2\pi i}\oint_{C_1(\delta)} \quad +\frac{1}{2\pi i}\oint_{C_w(\delta)}\nonumber\\
			=\,&-\int_{1+\delta}^{w-\delta}\frac{(t^g-A(t))'}{t^g-A(t)}\,\d t+\frac{1}{2\pi i}\oint_{C_1(\delta)} \quad +\frac{1}{2\pi i}\oint_{C_w(\delta)}.
			\end{align}
			Now, since $g-A'(1)>0$ (due to stability),
			\begin{align}\label{eqn49}
			&\int_{1+\delta}^{w-\delta}\frac{(t^g-A(t))'}{t^g-A(t)}\,\d t = \ln\left(t^g-A(t)\right)\Big|_{1+\delta}^{w-\delta}\nonumber\\
			=\,&\ln\left(w^g-A(w)\right)+O(\delta)-{\rm{ln}}\left[(g-A'(1))\delta+O(\delta^2)\right]\nonumber\\
			=\,&\ln\left(\frac{w^g-A(w)}{g-A'(1)}\right)-\ln\delta+O(\delta),
			\end{align}
			where we have used that
			\begin{equation}\label{eqn410}
			t^g-A(t)=0+(t^g-A(t))'_{t=1}(t-1)+O\big((t-1)^2\big), \quad t\rightarrow 1.
			\end{equation}
			As to the last integral on the last line of \eqref{eqn48}, we use that
			\begin{align}
			w Y(z)-z Y(w)&=(w Y'(w)-Y(w))(z-w)+O(|z-w|^2),\\
			Y(z)-z&=Y(w)-w+O(|z-w|),\\
			z^g-A(z)&=w^g-A(w)+O(|z-w|),
			\end{align}
			with non-vanishing numbers $w Y'(w)-Y(w)$, $Y(w)-w$ and $w^g-A(w)$. Therefore
			\begin{align}\label{eqn414}
			\frac{1}{2\pi i}\oint_{C_w(\delta)}&=\frac{1}{2\pi i}\oint_{C_w(\delta)}\ln\left(\frac{wY(z)-zY(w)}{Y(z)-z}\right)\,\frac{(z^g-A(z))'}{z^g-A(z)}\,\d z\nonumber\\
			&=O(\delta \,\ln \, \delta), \quad \delta\downarrow 0.
			\end{align}
			The middle integral on the last line of \eqref{eqn48} is more delicate since both $Y(z)-z$ and $z^g-A(z)$ vanish at $z=1$. For $z=1+\delta e^{i\phi}$ with $0<\phi<2\pi$ and $\delta\downarrow0$,
			\begin{align}\label{eqn415}
			\frac{wY(z)-z Y(w)}{Y(z)-z} &=\,\frac{w-Y(w)+O(|z-1|)}{1+Y'(1)(z-1)-z+O(|z-1|^2)}\nonumber\\
			&=\,-\frac{w-Y(w)+O(|z-1|)}{(1-Y'(1))(z-1)+O(|z-1|^2)}\nonumber\\
			&=\,-\frac{w-Y(w)}{1-Y'(1)} \,\frac{1}{\delta}e^{-i\phi}\big(1+O(\delta)\big).
			\end{align}
			Hence, since $w-Y(w)>0$, $1-Y'(1)>0$,
			\begin{align}\label{eqn416}
			\ln\left(\frac{wY(z)-z Y(w)}{Y(z)-z}\right) &= \ln\left|\frac{wY(z)-z Y(w)}{Y(z)-z}\right|+i \arg \left(\frac{wY(z)-z Y(w)}{Y(z)-z}\right)\nonumber\\
			&=\ln\left(\frac{w-Y(w)}{1-Y'(1)}\right)-\ln \delta+i(\pi-\phi)+O(\delta).
			\end{align}
			Next, as $z\rightarrow 1$,
			\begin{equation}\label{eqn417}
			\frac{(z^g-A(z))'}{z^g-A(z)}=\frac{g-A'(1)+O(|z-1|)}{(g-A'(1))(z-1)+O(|z-1|^2)}
			=\frac{1}{z-1}+O(1),
			\end{equation}
			since $g-A'(1)>0$. Hence, from \eqref{eqn416} and \eqref{eqn417} with $z=1+\delta e^{i \phi}$ and $dz=i\delta e^{i\phi}d\phi$ in the integral over $C_1$,
			\begin{align}\label{eqn418}
			\frac{1}{2\pi i}\oint_{C_1(\delta)} &= \frac{1}{2\pi i}\int_{0}^{2\pi}\ln\left(\frac{wY(z)-zY(w)}{Y(z)-z}\right)\,\frac{(z^g-A(z))'}{z^g-A(z)}\,i\delta e^{i\phi}\d\phi\nonumber\\
			&= \frac{1}{2\pi i}\int_{0}^{2\pi}\Big[\ln\left(\frac{w-Y(w)}{1-Y'(1)}\right)-\ln \delta+i(\pi-\phi)+O(\delta)\Big]\cdot\Big[\frac1\delta e^{-i\phi}+O(1)\Big] i\delta e^{i\phi}\d\phi\nonumber\\
			&=\ln\left(\frac{w-Y(w)}{1-Y'(1)}\right)-\ln \delta+O(\delta),
			\end{align}
			where we have also used that $\int_0^{2\pi}(\pi-\phi)\,\d\phi=0$.
			
			Using \eqref{eqn49}, \eqref{eqn414} and \eqref{eqn418} in \eqref{eqn48} yields
			\begin{align}\label{eqn419}
			&\frac{1}{2\pi i}\oint_{C}\ln\left(\frac{wY(z)-zY(w)}{Y(z)-z}\right)\,\frac{(z^g-A(z))'}{z^g-A(z)}\,\d z\nonumber\\
			=\,&\ln\left(\frac{g-A'(1)}{w^g-A(w)}\right)+\ln\delta+O(\delta)+O(\delta\ln \delta)+ \ln\left(\frac{w-Y(w)}{1-Y'(1)}\right)-\ln \delta+O(\delta)\nonumber\\
			=\,&\ln\left(\frac{g-A'(1)}{w^g-A(w)} \cdot \frac{w-Y(w)}{1-Y'(1)}\right)+O(\delta).
			\end{align}
			Returning then to \eqref{eqn44}-\eqref{eqn45}, letting $\delta\downarrow0$, we see that
			\begin{equation}\label{eqn420}
			I(w)=\ln\left[\frac{g-A'(1)}{w^g-A(w)}\cdot \frac{w-Y(w)}{1-Y'(1)}\prod_{k=1}^{g-1}\frac{wY(z_k)-z_kY(w)}{Y(z_k)-z_k}\right]=\ln\big[X_g(w)\big]
			\end{equation}
			by \eqref{pgf4}. Here we have also used that the zeros $z_k$ are real or come in conjugate pairs so that for $w\in(1,1+\epsilon)$ by \eqref{eq:greaterzero} in Lemma \ref{lem:nozero} both $X_g(w)$ and the product $\prod_{k=1}^{g-1}$ in \eqref{eqn420} are real and positive, with
			\begin{equation}\label{eqn421}
			\ln\left(\prod_{k=1}^{g-1}\frac{wY(z_k)-z_kY(w)}{Y(z_k)-z_k}\right)=\sum_{k=1}^{g-1}\ln\left(\frac{wY(z_k)-z_kY(w)}{Y(z_k)-z_k}\right).
			\end{equation}
			This proves \eqref{pk2} for $w\in(1,1+\epsilon)$.

			\subsection{Completion of the proof}
			
			The extension of the validity range of \eqref{pk2} beyond the set $1<w<1+\epsilon$ is compromised by the appearance of the factor $\ln[(w Y(z)-z Y(w))/(Y(z)-z)]$ in the integrand.
			The validity range can be extended to an open set containing the interval $[0,1]$, allowing computation of moments and derivatives. To see this, let
			\begin{equation}\label{eqn431}
			Q(z,w)=\frac{w Y(z) - z Y(w)}{Y(z)-z} =Y(w)\frac{1-\frac{Y(z)/z}{Y(w)/w}}{1-Y(z)/z}, \qquad |z|,|w| \leq 1+\epsilon.
			\end{equation}
			For $0\leq w\leq 1$ and $|z|=1+\epsilon$,
			\begin{equation}\label{eqn432a}
			0<Y(0)\leq Y(w)\leq 1, \quad \left|\frac{Y(z)/z}{Y(w)/w}\right|\leq \left|\frac{Y(z)}{z}\right|\leq \frac{Y(1+\epsilon)}{1+\epsilon}<1,
			\end{equation}
			and so $Q(z,w)$ is bounded away from $(-\infty,0]$ when $0\leq w\leq 1$ and $|z|=1+\epsilon$. By continuity of $Q$ as a function of $w$, this continues to hold for $w$ in an open set $\Omega$ containing $[0,1]$ and $|z|=1+\epsilon$. This implies that $\ln Q(z,w)$ is analytic in $w\in\Omega$, with principal value $\ln$, extending the validity of \eqref{pk2} to $w\in\Omega$ by analyticity. We have extensive numerical evidence that the set of $w$ for which $Q(z,w) \not\in (-\infty,0]$, all $z$ with $|z|=1+\epsilon$, contains a disk around 0 with radius not significantly smaller than $1+\epsilon$. This would extend the validity of \eqref{pk2} beyond the unit disk $|w|\leq 1$.

			We now re-express the integral form in \eqref{pk2} to a form that is valid for all $w$, $|w|<1+\epsilon$. We choose here $\epsilon$ such that $1+\epsilon<\min\{t_0,R_0\}=:1+\epsilon_0$ as in Subsection \ref{subsect42}. Let $w$ be fixed with $1<w<1+\epsilon$. We compute for $|z|=1+\epsilon$
			\begin{equation}\label{eqn432}
			\frac{(z^g-A(z))'}{z^g-A(z)}=\frac{g}{z}+\frac{\big(1-\frac{A(z)}{z^g}\big)'}{1-\frac{A(z)}{z^g}}=\frac{g}{z}+\frac{\d}{\d z}\left[\ln\left(1-\frac{A(z)}{z^g}\right)\right],
			\end{equation}
			where we can choose the principal value of $\ln$ since
			\begin{equation}\label{eqn433}
			\left|\frac{A(z)}{z^g}\right| \leq \frac{A(1+\epsilon)}{(1+\epsilon)^g}<1, \quad |z|=1+\epsilon.
			\end{equation}
			As in \eqref{eqn47}-\eqref{eqn48},
			\begin{align}\label{eqn434}
			&\frac{1}{2\pi i}\oint_{|z|=1+\epsilon}\ln\left(\frac{w Y(z)-z Y(w)}{Y(z)-z}\right)\frac{g}{z}\d z\nonumber\\
			=\,& g \left.\ln\left(\frac{w Y(z)-z Y(w)}{Y(z)-z}\right)\right|_{z=0}+\frac{g}{2\pi i} \oint_C \ln\left(\frac{w Y(z)-z Y(w)}{Y(z)-z}\right)\frac{\d z}{z}\nonumber\\
			=\,& g \ln w-g \int_{1+\delta}^{w-\delta} \frac{\d z}{z}+O(\delta \ln \delta)\nonumber\\
			=\,& g \ln w-g \ln\left(\frac{w-\delta}{1+\delta}\right)+O(\delta \ln \delta),
			\end{align}
			and this vanishes as $\delta\downarrow0$. Therefore, see \eqref{eqn44},
			\begin{align}\label{eqn435}
			I(w) &= \frac{1}{2\pi i}\oint_{|z|=1+\epsilon}\ln\left(\frac{w Y(z)-z Y(w)}{Y(z)-z}\right)\frac{\d}{\d z}\left[\ln\left(1-\frac{A(z)}{z^g}\right)\right]     \d z\nonumber\\
			&=\frac{-1}{2\pi i}\oint_{|z|=1+\epsilon}\frac{\d}{\d z}\left[\ln\left(\frac{w Y(z)-z Y(w)}{Y(z)-z}\right)\right]\ln\left(1-\frac{A(z)}{z^g}\right)\d z,
			\end{align}
			where we have used partial integration with continuous differentiable functions $\ln\left(1-A(z)/z^g\right)$ and $\ln\left[(w Y(z)-z Y(w))/(Y(z)-z)\right]$ on the closed contour $|z|=1+\epsilon$. We compute
			\begin{equation}\label{eqn436}
			\frac{\d}{\d z}\left[\ln\left(\frac{w Y(z)-z Y(w)}{Y(z)-z}\right)\right]=\frac{Y'(z)z-Y(z)}{Y(z)-z} \,\frac{Y(w)-w}{w Y(z)-z Y(w)},
			\end{equation}
			and obtain
			\begin{equation}\label{eqn437}
			I(w) = \frac{-1}{2\pi i}\oint_{|z|=1+\epsilon}\frac{Y'(z)z-Y(z)}{Y(z)-z} \,\frac{Y(w)-w}{w Y(z)-z Y(w)}\ln\left(1-\frac{A(z)}{z^g}\right) \d z,
			\end{equation}
			which is valid for any $w\in(1,1+\epsilon)$.
			
			We now extend \eqref{eqn437} to all $w$ with $|w|<1+\epsilon$ by using Lemma \ref{lem:nozero}. Let $0<\epsilon_1<\epsilon$. We have $|Y(z)-z|>0$ when $|z|=1+\epsilon$ and
			\begin{equation}\label{eqn438}
			\big|w Y(z)-z Y(w)\big|>0 \quad \text{ when }|z|=1+\epsilon, |w|\leq1+\epsilon_1,
			\end{equation}
			by Lemma \ref{lem:nozero} and $Y(0)\neq 0$. Therefore, by continuity and compactness,  $(w Y(z)-z Y(w))(Y(z)-z)$ is bounded away from 0 when $|z|=1+\epsilon$ and $|w|\leq 1+\epsilon_1$. This implies that the right-hand side of \eqref{eqn437} is analytic in $w$, $|w|<1+\epsilon_1$, by analyticity of $Y$. Since $X_g(w)=\exp(I(w))$ for $1<w<1+\epsilon$, we then get by analyticity of $X_g$ that
			\begin{equation}\label{eqn439}
			X_g(w)=\exp\left(\frac{-1}{2\pi i}\oint_{|z|=1+\epsilon}\frac{Y'(z)z-Y(z)}{Y(z)-z}\,\frac{Y(w)-w}{w Y(z) - z Y(w)}
			\ln\left(1-\frac{A(z)}{z^g}\right)\,\d z\right)
			\end{equation}
			holds for all $w$, $|w|\leq 1+\epsilon_1$ and any $\epsilon_1\in(0,\epsilon)$. Then a simple rearrangement of the integrand in \eqref{eqn439} yields Theorem \ref{mainttt}.
			
			\section{Conclusions}\label{sec:con}
			We have presented novel formal solutions for the FCTL queue in the form of contour integrals. Theorem \ref{mainttt} presents the contour-integral representation for the PGF of the overflow queue. From this PGF, essentially all relevant information about the stationary behavior of the FCTL queue can be obtained, by taking derivatives at one for the moments, derivatives at zero for the distribution, and by using simple recursions to obtain the queue lengths at all moments within the cycle and the stationary delay distribution. A contour integral expression for the first moment was obtained recently by Oblakova et al.~\cite{ahmad}, and the present paper can be seen as an extension of that work. The two papers together present an alternative approach for the FCTL queue and its generalizations, using contour integrals instead of factorizations in terms of complex roots that need to be determined numerically.
			
			A possible thread for future research relates to asymptotics.
			In classical queueing theory, a prominent line of research is related to heavy traffic, an asymptotic regime in which the traffic intensity approaches 100\%. Next to more probabilistic methods such as weak convergence techniques and coupling, another way to obtain heavy-traffic results is through the asymptotic evaluation of Pollaczek-type integrals; see e.g.~\cite{cohen,kingman1962queues} for single-server queues and \cite{britt1} for classical bulk-service queues. Now that Pollaczek-type integrals for the FCTL queue are available, it is worthwhile to explore the possibilities for heavy-traffic analysis.

			\section*{Acknowledgments}
			
			We thank the authors of \cite{ahmad} for sharing an early version of their work. This work is supported by the NWO Gravitation Networks grant 024.002.003.
			The work of JvL is further supported by an NWO TOP-GO grant and by an ERC Starting Grant. The research of RT is done as part of the {\sc Dynafloat} project, funded by NWO, grant number 438-13-206.

			\bibliographystyle{abbrv}
			

			\appendix
			\section{Root-finding algorithm}\label{app:alg}
			We now present a root-finding algorithm and some supporting results. A similar algorithm was used in \cite{BvLfctlcorrelated}.
			The idea behind the algorithm is that roots of polynomial equations are generally easy to find, at least numerically. Therefore, we approximate $A(z)$ (which typically is a non-polynomial function) with its Taylor series $A_n(z)$ of order $n$. Solving this truncated equation boils down to root-finding of a polynomial. If the roots of the truncated equation are sufficiently close to the roots of $z^g-A(z)$, we can find the latter roots easily from the roots of $z^g-A_n(z)$ by using a Newton-Raphson type method. 
			
			\begin{algorithm}[H]
				\caption{Root-finding based on truncated Taylor series of $z^g-A(z)$.} \label{alg:root}
				\begin{algorithmic}[1]
					\State Input: $A(z),g$ (and often $A(z)=Y(z)^c$).
					\State Define: $D(z)=z^g-A(z)$.
					\State Compute $n$: $\max\{100,50+\max\{c,g\}\}$.
					\State Compute Taylor expansion $D_n(z)$ of $D(z)$ of order $n$.
					\State Numerically solve $D_n(z)=0$ for $|z|\leq1$, obtaining roots $\hat{z}_1,...,\hat{z}_g$.
					\State Use $\hat{z}_1,...,\hat{z}_g$ as input for a method to find the roots of $D(z)$ for $|z|\leq1$, obtaining roots $z_1,...,z_g$.
					\State Return $z_1,...,z_g$.
				\end{algorithmic}
			\end{algorithm}
			
			
			\vspace{0.3 cm}
			\noindent We now present two propositions, in support of Algorithm \ref{alg:root}. The propositions state that under very mild conditions the number of roots of $z^g-A(z)$ on or within the unit circle are equal to the number of roots of the truncated equation $z^g-A(z)$ on or within the unit circle and that the roots of the truncated equation converge to the roots of $z^g-A(z)$ (when $n$ tends to infinity).
			
			\begin{proposition}\label{prop:roots}
				Let $D(z)=z^g-A(z)$ and let $D_n(z) := z^{g}-A_n(z)$, where $A_n(z)$ denotes the $n$-th order Taylor approximation of $A(z)$. Upon assuming that $A(z)$ is a PGF; that $A(z)$ is analytic in the disk $|z|<1+\delta$ for some $\delta>0$; and that $g< A^\prime(1)$, $D_n(z)=0$ has as many roots on or within the unit circle as $D(z)$ (i.e. $g$).
			\end{proposition}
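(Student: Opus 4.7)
The approach is to apply Rouché's theorem twice: first on the circle $|z|=1+\epsilon$ with $\epsilon>0$ sufficiently small to count the zeros of $D(z)=z^g-A(z)$, and then again, using uniform convergence of the Taylor polynomials $A_n\to A$ on compacta, to transfer the same count to $D_n$. In reading the statement I interpret the hypothesis ``$g<A^\prime(1)$'' as the stability condition $A^\prime(1)<g$ used consistently elsewhere in the paper; without stability the conclusion is false, as the elementary case $A(z)=z^2$, $g=1$ (for which $D(z)=z-z^2$ has two roots in $|z|\leq1$) shows.

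First I would choose $\epsilon\in(0,\delta)$ small enough that $A(1+\epsilon)<(1+\epsilon)^g$, which is possible because $A(1)=1$ and $(A(1+\epsilon)-(1+\epsilon)^g)^\prime|_{\epsilon=0}=A^\prime(1)-g<0$. The triangle inequality for PGFs gives $|A(z)|\leq A(|z|)=A(1+\epsilon)<(1+\epsilon)^g=|z^g|$ on $|z|=1+\epsilon$, and Rouché then yields exactly $g$ zeros of $D$ in $|z|<1+\epsilon$. Letting $\epsilon\downarrow 0$ and using that the zero set of $D$ is discrete, all $g$ of these zeros lie in $|z|\leq 1$. Since $A$ is analytic in $|z|<1+\delta$, the Taylor polynomials $A_n$ converge uniformly to $A$ on the compact set $|z|\leq 1+\epsilon$; hence for all sufficiently large $n$, $|A_n(z)|<(1+\epsilon)^g$ on $|z|=1+\epsilon$, and a second application of Rouché gives $g$ zeros of $D_n$ in $|z|<1+\epsilon$.

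The main obstacle is showing these $g$ zeros of $D_n$ in fact lie in the closed unit disk rather than in the thin annulus $1<|z|<1+\epsilon$. Hurwitz's theorem handles the zeros of $D$ that are strictly inside $|z|<1$: for $n$ large they produce nearby zeros of $D_n$ also strictly inside. The subtle case is the simple zero of $D$ at $z=1$ (simple because $D^\prime(1)=g-A^\prime(1)>0$); here $D_n(1)=1-A_n(1)=\sum_{k>n}a_k\geq 0$ and $D_n^\prime(1)\to g-A^\prime(1)>0$, so the implicit-function/Newton estimate places the perturbed root at $z_n^\ast = 1-D_n(1)/D_n^\prime(1)+O(D_n(1)^2)$, which is $\leq 1$ for all large $n$ and therefore remains in the closed unit disk. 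In the periodic case, where $D$ may possess additional simple roots of the form $e^{2\pi ik/d}$ on $|z|=1$, an analogous local analysis (each such root is simple, with $D^\prime$ known explicitly in direction) confirms the perturbation is inward. Combining these localizations yields that $D_n$ has exactly $g$ roots in $|z|\leq 1$, matching the count for $D$.
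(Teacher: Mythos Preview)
Your diagnosis of the typo ($A'(1)<g$, not $g<A'(1)$) is correct, and your overall strategy---Rouch\'e on a circle of radius $1+\epsilon$, followed by a separate argument that the zeros actually lie in the closed unit disk---is sound. In fact the second half of your argument (Hurwitz for interior zeros, an implicit-function estimate at $z=1$) is more scrupulous than the paper's own proof, which applies Rouch\'e on $|z|=1+\gamma$ and then simply asserts the conclusion for the unit disk without further comment.

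That said, you work harder than necessary and end up proving less than the proposition claims. The paper's argument rests on one observation you bypass: because $A$ has nonnegative Taylor coefficients, the truncation satisfies $A_n(r)\leq A(r)$ for every $r\geq 0$ and \emph{every} $n$. Hence on $|z|=1+\gamma$ one has directly
\[
|A_n(z)|\;\leq\; A_n(1+\gamma)\;\leq\; A(1+\gamma)\;<\;(1+\gamma)^g\;=\;|z^g|,
\]
so Rouch\'e applies to $D_n$ for all $n$, not only for $n$ large. Your route through uniform convergence of $A_n\to A$ delivers the Rouch\'e inequality only eventually, so your proof as written establishes the conclusion only for sufficiently large $n$, whereas the proposition carries no such restriction. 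Incidentally, the same nonnegativity trick also renders your Hurwitz/local-analysis step unnecessary: whenever the truncation is genuine one has $A_n(1)<1$, and then Rouch\'e can be run directly on $|z|=1$ via $|A_n(z)|\leq A_n(1)<1=|z^g|$, placing all $g$ zeros strictly inside the open unit disk; if instead $A_n(1)=1$ then $A_n=A$ and there is nothing to prove.
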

			
			\begin{proof}
				Rouch\'{e}'s theorem says that if $f$ and $g$ are analytic inside some region $K$ with closed contour $\partial K$ and if $|g(z)|<|f(z)|$ on $\partial K$, then $f$ and $f+g$ have the same number of zeros inside $K$.
				
				The conditions that $A(z)$ has to be analytic in $|z|<1+\delta$ and $g< A^\prime(1)$ together imply
				\begin{equation} \label{eq:ineq}
				(1+\gamma)^g >A(1+\gamma),
				\end{equation}
				for some $\gamma\in(0,\delta)$, see e.g. \cite{darroch}. Assume $|z|=1+\gamma$. Then:
				\begin{align}
				|z|^g & = (1+\gamma)^g
				> A(1+\gamma)
				\geq A_n(1+\gamma)
				= A_n(|z|)
				\geq |A_n(z)|,
				\end{align}
				where the strict inequality follows from \eqref{eq:ineq} and the remaining inequalities from the fact that $A(z)$ is a PGF.
				So we may apply Rouch\'{e}'s theorem on $f(z)=z^g$ and $g(z)=-A_n(z)$. As $z^g$ has $g$ roots on or within the unit circle, $D_n(z)$ will have $g$ roots as well, just as $D(z)$.
			\end{proof}
			
			\begin{proposition}\label{prop:dist}
				Let $D(z)$ and $D_n(z)$ be as defined in Proposition \ref{prop:roots}. Let $z_j$, $j=1,...,g$ be the roots of $D(z)$ on or within the unit circle. Then
				\begin{equation}
				|D_n(z_j)| \leq \sum_{j=n+1}^\infty a_k,
				\end{equation}
				for $j=1,...,g$, where $a_k = \P(A=k)$.
			\end{proposition}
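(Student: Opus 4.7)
The plan is extremely short: use the fact that $z_j$ is a root of $D(z)$ to convert $D_n(z_j)$ into a Taylor remainder, and then bound that remainder using nonnegativity of the coefficients of the PGF $A(z)$.

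Concretely, first I would observe that since $z_j$ satisfies $z_j^g = A(z_j)$, we have the identity
\begin{equation}
D_n(z_j) = z_j^g - A_n(z_j) = A(z_j) - A_n(z_j).
\end{equation}
This eliminates the $z_j^g$ term at the cost of nothing, and rewrites the quantity of interest purely as the tail of the Taylor series of $A(z)$ around $z=0$. Because $A(z)$ is analytic in $|z|<1+\delta$, this series converges absolutely on the closed unit disk, which is where the $z_j$ live.

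Next I would write the remainder explicitly as
\begin{equation}
A(z_j) - A_n(z_j) = \sum_{k=n+1}^\infty a_k z_j^k,
\end{equation}
and apply the triangle inequality to obtain
\begin{equation}
|D_n(z_j)| \leq \sum_{k=n+1}^\infty a_k |z_j|^k \leq \sum_{k=n+1}^\infty a_k,
\end{equation}
where the last step uses $|z_j|\leq 1$ together with $a_k = \mathbb{P}(A=k)\geq 0$ (so all terms are nonnegative and taking $|z_j|^k\leq 1$ only enlarges the bound).

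There is essentially no obstacle here; the entire content of the proof is the observation $z_j^g = A(z_j)$ and the monotonicity of power series with nonnegative coefficients on $[0,1]$. The only point worth mentioning is that the analyticity hypothesis on $A(z)$ (already required in Proposition \ref{prop:roots}) is what legitimizes term-by-term manipulation of the Taylor tail on the closed unit disk.
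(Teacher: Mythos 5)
Your proof is correct and is essentially identical to the paper's: the paper also uses $D(z_j)=0$ to write $D_n(z_j)=D_n(z_j)-D(z_j)=\sum_{i=n+1}^\infty a_i z_j^i$ and then bounds this tail via the triangle inequality with $a_i\geq 0$ and $|z_j|\leq 1$. No further comment is needed.
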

			\begin{proof}
				We directly obtain from the definition of $z_j$
				\begin{align}
				|D_n(z_j)| & = |D_n(z_j)-D(z_j)| \nonumber\\
				& =\left|z_{j}^g-\sum_{i=0}^n a_iz_{j}^i-z_{j}^g+\sum_{i=0}^\infty a_iz_{j}^i \right|
				= \left|\sum_{i=n+1}^\infty a_iz_{j}^i\right|
				\leq \sum_{i=n+1}^\infty a_i,
				\end{align}
				because $a_i\geq0$ and $|z_j|\leq 1$.
			\end{proof}
			From Proposition \ref{prop:dist} we see that if we let $n$ tend to infinity, then $D_n(z_j)$ tends to 0. This implies that the roots obtained by using $D_n(z)$ will be close to the actual roots of $D(z)$ when $n$ is sufficiently high.
			
			\vspace{.3cm} \noindent {\bf Roots for the Poisson case}
			Take $A(z)=\e^{c\lambda(z-1)}$ and let $W(\cdot)$ denote the Lambert W-function.
			Then
			\begin{equation}\label{eq:LambertW}
			z_k=-\frac{g}{c\lambda}W\left(-\frac{c\lambda}{g}e^{2\pi i k/g}\e^{-c\lambda/g}\right), \quad k=1,\ldots,g-1
			\end{equation}
			with $W(\cdot)$  the principal value of the Lambert W-function and $i$ is the imaginary unit satisfying $i^2=-1$.
		\end{document}